\newcommand{\mycolor}{Navy}
\newtheorem{The}{Theorem}[section]
\newtheorem{Lem}[The]{Lemma}
\newtheorem{Prop}[The]{Proposition}
\newtheorem{Cor}[The]{Corollary}
\newcommand{\C}{\mathbb{C}}
\newcommand{\Z}{\mathbb{Z}}
\newcommand{\M}{\mathcal{M}}
\begin{document}
%tieu de va ten tg%
 \title[Conditional plurisubharmonic envelopes]
 {On the conditional plurisubharmonic envelopes of bounded functions} 
\setcounter{tocdepth}{1}
\author{Hoang-Son Do} 
\address{Institute of Mathematics \\ Vietnam Academy of Science and Technology \\18
Hoang Quoc Viet \\Hanoi \\Vietnam}
\email{hoangson.do.vn@gmail.com, dhson@math.ac.vn}
\author{Giang Le}
\address{Department of Mathematics, Hanoi National University of Education, 136-Xuan Thuy, Cau Giay, Hanoi, VietNam}
\email{legiang@hnue.edu.vn, legiang01@yahoo.com}
\date{\today\\ The second author was supported by the Vietnam National Foundation for Science and Technology Development (NAFOSTED) under grant number 101.04-2018.03. }
\maketitle
\begin{abstract}
In this paper, we extend some recent results of Guedj-Lu-Zeriahi \cite{GLZ19} about
 psh envelopes of bounded functions on bounded domains in $\C^n$. 
 We also present a result on the regularity of psh envelopes.
\end{abstract}
\section{Introduction}
In \cite{GLZ19}, Guedj-Lu-Zeriahi studied quasi-plurisubharmonic envelopes on compact K\"ahler manifolds and
plurisubharmonic envelopes on domains of $\C^n$. By using and extending an approximation process due to Berman
 \cite{Ber19}, they show that the (quasi-)plurisubharmonic
 envelope of a viscosity super-solution is a pluripotential super-solution of a given complex 
 Monge-Amp\`ere equation. Our goal is to extend Guedj-Lu-Zeriahi's results for {\it conditional 
 plurisubharmonic envelopes} on domains of $\C^n$.

Let $\Omega\subset \C^n$ be a bounded domain. 
Denote by $\M$ the set of Borel measures $\mu$ on $\Omega$ satisfying $\mu=(dd^c\varphi)^n$ for some bounded
plurisubharmonic function $\varphi$ in $\Omega$.
If $\mu\in\M$ and $u$ is a bounded function in $\Omega$ then 
 we define
\begin{center}
	$P(u, \mu, \Omega):=(\sup\{v\in PSH(\Omega)\cap L^{\infty}(\Omega):v\leq u, (dd^cv)^n\geq \mu \})^*$.
\end{center}
By \cite{Kol98}, we have $fd\lambda\in\M$ for every $f\in L^p(\Omega), p>1$, where $\lambda$ is the
Lebesgue measure in $\C^n$.
If $f\in L^p(\Omega), p>1,$  then we also denote $P(u, f, \Omega):=P(u, fd\lambda, \Omega)$.
The first main result of this paper is the following:
  \begin{The}\label{main1}
  	Assume that $\Omega\subset\C^n$ is a bounded pseudoconvex domain.
  	Suppose that $f\in L^{p}(\Omega) (p>1)$ and $g\in C(\Omega)$ are non-negative functions.
  	 If $u$ is a bounded viscosity 
  	subsolution to the equation 
  	\begin{equation}
  	(dd^cw)^n=gd\lambda,
  	\end{equation}
  	on $\Omega$ then $(dd^cP(u, f, \Omega))^n\leq \max\{f, g\}d\lambda$ in the pluripotential sense in $\Omega$.
  \end{The}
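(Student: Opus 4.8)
The plan is to exhibit $\phi:=P(u,f,\Omega)$ as the increasing limit of solutions of auxiliary complex Monge--Amp\`ere equations, in the spirit of Berman's approximation scheme used by Guedj--Lu--Zeriahi, and to read the desired bound directly off their right-hand sides. I first record the easy half: $\phi$ is a bounded plurisubharmonic function with $\phi\le u$, and since each competitor $v$ satisfies $(dd^cv)^n\ge f\,d\lambda$ and this property is inherited by the upper regularized supremum, one has $(dd^c\phi)^n\ge f\,d\lambda$. Hence only the inequality $(dd^c\phi)^n\le\max\{f,g\}\,d\lambda$ remains, and morally it splits into $(dd^c\phi)^n=f\,d\lambda$ off the contact set $\{\phi=u\}$ and $(dd^c\phi)^n\le g\,d\lambda$ on it; the approximation makes both parts simultaneous.

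For $\beta>0$ I would solve
\begin{equation*}
(dd^cu_\beta)^n=\max\{f,\,g\,e^{\beta(u_\beta-u)}\}\,d\lambda\ \text{ in }\Omega,\qquad u_\beta=0\ \text{ on }\partial\Omega .
\end{equation*}
Because $\Omega$ is bounded and pseudoconvex, $f\in L^p(\Omega)$ with $p>1$, $g$ is continuous and $u$ is bounded, the right-hand side is (after a preliminary regularization of $u$ by a decreasing sequence of continuous sup-convolutions and of $f$ by continuous densities) an $L^p$ density, so Kolodziej's theory yields a unique bounded, indeed continuous, solution $u_\beta$; the final estimate will be restored by a stability argument when the regularizations are removed.

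The heart of the matter is the uniform bound. The hypothesis that $u$ is a viscosity subsolution of $(dd^cw)^n=g\,d\lambda$ provides, through the comparison of viscosity and pluripotential notions of Eyssidieux--Guedj--Zeriahi, the pluripotential bound $(dd^cu)^n\le g\,d\lambda$ that is needed to run the comparison principle on the open set $\{u_\beta>u\}$: there the density of $(dd^cu_\beta)^n$ is at least $g\,e^{\beta(u_\beta-u)}>g$ while that of $(dd^cu)^n$ is at most $g$, so the comparison principle forces $\{u_\beta>u\}=\emptyset$, i.e. $u_\beta\le u$. Consequently $e^{\beta(u_\beta-u)}\le1$ and
\begin{equation*}
(dd^cu_\beta)^n=\max\{f,\,g\,e^{\beta(u_\beta-u)}\}\,d\lambda\le\max\{f,g\}\,d\lambda\qquad(\beta>0).
\end{equation*}

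Finally I would let $\beta\to\infty$. Since a smaller right-hand side produces a larger solution, $\beta\mapsto u_\beta$ is nondecreasing; its upper regularized limit $\psi$ is a bounded plurisubharmonic competitor ($\psi\le u$ and $(dd^c\psi)^n\ge f\,d\lambda$, as each $u_\beta$ has density $\ge f$), so $\psi\le\phi$, while comparing an arbitrary competitor with $u_\beta$ gives $\phi\le\psi$; thus $u_\beta\nearrow\phi$. Passing to the limit in the last displayed inequality by the weak continuity of the Monge--Amp\`ere operator along this monotone convergence yields $(dd^c\phi)^n\le\max\{f,g\}\,d\lambda$, as desired. The main obstacle is exactly this passage to the limit under low regularity: one must legitimize the approximation of the merely bounded $u$ and the $L^p$ density $f$, keep enough uniform control (e.g. $L^\infty$ and capacity estimates in the sense of Kolodziej) for the Monge--Amp\`ere measures to converge, and verify that the monotone limit is genuinely the conditional envelope rather than a strictly larger function---this is where the technical weight of the argument lies.
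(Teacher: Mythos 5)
Your scheme is recognizably the same Berman-type penalization the paper uses (the paper solves $(dd^cu_j)^n=\max\{e^{j(u_j-u)}g,f\}\,d\lambda$ with boundary values $u$), but the two steps that actually carry the proof are respectively wrong and missing. The fatal one is the claim that the viscosity hypothesis on $u$ yields the \emph{pluripotential} bound $(dd^cu)^n\le g\,d\lambda$ ``through the comparison of viscosity and pluripotential notions'' of Eyssidieux--Guedj--Zeriahi. This fails under either reading of the hypothesis. If ``subsolution'' is taken in the EGZ sense (upper test functions, Monge--Amp\`ere $\ge$ density), their equivalence theorem gives the \emph{reverse} inequality $(dd^cu)^n\ge g\,d\lambda$, not yours. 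If instead one takes the reading consistent with the paper's proof and conclusion (lower test functions, Monge--Amp\`ere $\le$ density), then no viscosity-to-pluripotential transfer exists: $u$ is merely a bounded function, not plurisubharmonic, so $(dd^cu)^n$ has no pluripotential meaning at all, and converting such a one-sided viscosity inequality into a pluripotential inequality is precisely the content of the theorem being proved (and of GLZ19) --- which is why the conclusion is stated for the envelope $P(u,f,\Omega)$ and never for $u$ itself. Assuming it for $u$ is circular. The error then propagates: the comparison you run on $\{u_\beta>u\}$ is the Bedford--Taylor comparison principle, which needs both functions to be bounded psh, and $u$ is not. The paper sidesteps exactly this by staying in the viscosity category at this step: $u_j$ is a viscosity solution of the penalized equation (by EGZ's equivalence on the ``$\ge$'' side), $u$ satisfies the opposite viscosity inequality for that equation because its right-hand side at $w=u$ equals $\max\{f,g\}\ge g$, and the viscosity comparison principle of [EGZ, DDT] gives $u_j\le u$. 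Note also that your boundary normalization $u_\beta=0$ destroys even the formal ordering on $\partial\Omega$ that any comparison needs; the paper takes boundary values $u$, and this is what forces its preliminary inf-convolution regularization of $u$ on relatively compact subdomains, the exhaustion of $\Omega$, and the $L^p$-stability of envelopes (Propositions 2.3--2.5) in the final reduction.

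The second gap is the identification $\lim_\beta u_\beta=P(u,f,\Omega)$. It cannot be obtained by ``comparing an arbitrary competitor with $u_\beta$'': on a set like $\{u_\beta<v-\epsilon\}$ both measures dominate $f\,d\lambda$, so Bedford--Taylor comparison yields mutually compatible inequalities and no contradiction. The paper's argument here is the real work: for arbitrary $h\in PSH(\Omega,[-1,0))$ and $\epsilon>0$ it considers $E(j,\epsilon,h)=\{u_j<P(u,f,\Omega)-\epsilon+\epsilon h\}$; on this set $u_j<u-\epsilon$, so the density of $(dd^cu_j)^n$ is at most $\max\{e^{-j\epsilon}g,f\}$, and comparison gives $\epsilon^n\int_{E(j,\epsilon,h)}(dd^ch)^n\le\int_\Omega\bigl(\max\{e^{-j\epsilon}g,f\}-f\bigr)\,d\lambda$, whence $Cap(\{u_j\le P(u,f,\Omega)-2\epsilon\},\Omega)\to 0$ as $j\to\infty$ and therefore $\lim_j u_j\ge P(u,f,\Omega)$. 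You explicitly defer this point as ``where the technical weight lies,'' but together with the collapse of the comparison step above, the proposal as written proves neither the upper bound on $(dd^c\lim u_\beta)^n$ being inherited by the envelope nor the inequality $u_\beta\le u$ it rests on; what remains (regularizing $u$ and $f$, exhausting $\Omega$) is the routine part.
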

\begin{Cor}
	Assume that $\Omega\subset\C^n$ is a bounded pseudoconvex domain and
	and $0\leq f, g\in L^p(\Omega), p>1$. Suppose that  $u$ is a continuous plurisubharmonic 
	on $\Omega$ such that $(dd^cu)^n=gd\lambda$ in the pluripotential sense. Then
	\begin{center}
		$(dd^cP(u, f, \Omega))^n\leq \max\{f, g \}d\lambda.$
	\end{center}
\end{Cor}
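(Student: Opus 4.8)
The plan is to deduce the Corollary from Theorem \ref{main1} by approximating the density $g$ by continuous functions, and then to use the stability of the complex Monge--Amp\`ere equation to pass to the limit. First I would dispose of the case where $g$ is continuous: by the standard equivalence between pluripotential and viscosity subsolutions for continuous densities (Eyssidieux--Guedj--Zeriahi), a continuous plurisubharmonic $u$ with $(dd^cu)^n=gd\lambda$ is in particular a bounded viscosity subsolution of $(dd^cw)^n=gd\lambda$. Theorem \ref{main1} then applies verbatim and gives $(dd^cP(u,f,\Omega))^n\le\max\{f,g\}d\lambda$, which is the desired conclusion in this special case.

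For a general density $0\le g\in L^p(\Omega)$ the difficulty is that one cannot monotonically approximate $g$ by continuous functions from either side (a continuous minorant below $g$ may be forced to be nonpositive, and a continuous majorant cannot dominate an unbounded $g$), so the approximation must take place at the level of the solutions. I would choose continuous functions $g_j\ge 0$ with $g_j\to g$ in $L^p(\Omega)$, and solve the Dirichlet problems $(dd^cu_j)^n=g_jd\lambda$ with boundary values $u_j=u$ on $\partial\Omega$; by Kolodziej's theory \cite{Kol98} these admit continuous plurisubharmonic solutions $u_j$. Each $u_j$ is then a bounded viscosity subsolution of its own equation, so the continuous case already treated yields
\begin{equation}
(dd^cP(u_j,f,\Omega))^n\le\max\{f,g_j\}d\lambda
\end{equation}
for every $j$. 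The key point is that the obstacle $u_j$ and the density $g_j$ move together, which is exactly what keeps us inside the hypotheses of Theorem \ref{main1}.

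It then remains to let $j\to\infty$. The $L^p$-stability estimate for the Monge--Amp\`ere equation, applied to $u_j$ and $u$ (which share the same boundary data and have densities converging in $L^p$), gives $\|u_j-u\|_{L^\infty(\Omega)}\to 0$. Since $P(\cdot,f,\Omega)$ is monotone in its first argument and commutes with the addition of constants, it satisfies $\|P(u_j,f,\Omega)-P(u,f,\Omega)\|_{L^\infty(\Omega)}\le\|u_j-u\|_{L^\infty(\Omega)}$, so $P(u_j,f,\Omega)\to P(u,f,\Omega)$ uniformly. Uniform convergence of uniformly bounded plurisubharmonic functions forces weak convergence of the associated Monge--Amp\`ere measures, $(dd^cP(u_j,f,\Omega))^n\to(dd^cP(u,f,\Omega))^n$, while $\max\{f,g_j\}\to\max\{f,g\}$ in $L^p$ (the map $t\mapsto\max\{f,t\}$ being $1$-Lipschitz) and hence $\max\{f,g_j\}d\lambda\to\max\{f,g\}d\lambda$ weakly. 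Testing the displayed inequality against an arbitrary $0\le\chi\in C_c(\Omega)$ and passing to the limit yields $(dd^cP(u,f,\Omega))^n\le\max\{f,g\}d\lambda$.

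The main obstacle I anticipate is the limiting step, and specifically producing the uniform convergence $u_j\to u$: this is precisely where the solvability of the Dirichlet problem and the $L^p$-stability of its solutions enter, and some care is needed regarding the boundary regularity of the bounded pseudoconvex domain (one may have to exhaust $\Omega$ by strictly pseudoconvex subdomains and argue on each). The remaining ingredients---the weak continuity of the Monge--Amp\`ere operator along uniformly convergent sequences of uniformly bounded plurisubharmonic functions, and the Lipschitz dependence of the envelope on its obstacle---are routine.
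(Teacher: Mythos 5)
The paper states this corollary without proof, so there is no line-by-line comparison to make; judged on its own merits, your strategy---reduce to the continuous-density case via the Eyssidieux--Guedj--Zeriahi equivalence of pluripotential and viscosity (sub)solutions, then approximate a general $g\in L^p$ at the level of \emph{solutions} of a Dirichlet problem rather than at the level of densities---is the natural route, and your diagnosis of why density-level approximation fails (a continuous minorant of $g$ may be forced to be nonpositive, a continuous majorant need not exist) is exactly right. The limiting step is also sound as you describe it: Lipschitz dependence of $P(\cdot,f,\Omega)$ on the obstacle, the $1$-Lipschitz property of $t\mapsto\max\{f,t\}$ in $L^p$, and Bedford--Taylor weak convergence of Monge--Amp\`ere measures under uniform limits.

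The one genuine defect is the Dirichlet step as you first state it: on a general bounded pseudoconvex $\Omega$ the problem $(dd^cu_j)^n=g_jd\lambda$, $u_j=u$ on $\partial\Omega$, is not well posed, for two distinct reasons. First, $u$ is only assumed continuous and plurisubharmonic \emph{in} $\Omega$, not on $\overline{\Omega}$, so the boundary condition ``$u_j=u$ on $\partial\Omega$'' has no meaning (this is an issue about $u$, not merely about the boundary regularity of $\Omega$ as you phrase it). Second, even with good boundary data, Kolodziej's existence theorem requires a strictly pseudoconvex (or at least $B$-regular) domain, which a bounded pseudoconvex domain need not be. Your own suggested repair is correct and must be promoted from a closing remark to the actual structure of the proof: exhaust $\Omega$ by smooth strictly pseudoconvex $\Omega_i\Subset\Omega$ (H\"ormander), run the whole approximation on each $\Omega_i$, where $u\in C(\overline{\Omega_i})$, Kolodziej existence applies, and the stability $\|u_j-u\|_{L^\infty(\Omega_i)}\le C\|g_j-g\|_{L^p}^{1/n}$ follows from the Bedford--Taylor comparison principle together with \cite[Theorem 1.1]{GKZ}, exactly as in the proof of Proposition \ref{prop4}; this yields $(dd^cP(u,f,\Omega_i))^n\le\max\{f,g\}d\lambda$ on each $\Omega_i$. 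You then still need a final limit in the domain variable, which your sketch omits: by Proposition \ref{prop2}, $P(u,f,\Omega_i)$ decreases to $P(u,f,\Omega)$, and Bedford--Taylor convergence for decreasing sequences passes the measure inequality to $\Omega$. This last device is the same one used in the paper's own proof of Theorem \ref{main1}, so the repaired argument stays entirely within the paper's toolkit.
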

In this paper, we also study the continuity of $P(u, f, \Omega)$ when $u$ is continuous. 
Our second main result is the following:
\begin{The}\label{main0}
	Assume that $\Omega$ is a smooth strictly pseudoconvex domain. If $0\leq f\in L^p(\Omega), p>1$, and
	$u\in C(\overline{\Omega})$ then $P(u, f, \Omega)\in C(\overline{\Omega})$.
\end{The}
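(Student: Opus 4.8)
The plan is to prove continuity by first pinning down the correct boundary values and then upgrading the upper semicontinuous envelope to a genuinely continuous function, the latter being the delicate point. Write $P:=P(u,f,\Omega)$. Since $\Omega$ is pseudoconvex and $0\le f\in L^p$, \cite{Kol98} provides $\phi\in PSH(\Omega)\cap C(\overline{\Omega})$ with $(dd^c\phi)^n=f\,d\lambda$, and then $v_0:=\phi+\inf_{\overline{\Omega}}(u-\phi)$ satisfies $v_0\le u$ and $(dd^cv_0)^n=f\,d\lambda\ge f\,d\lambda$; hence the defining family is nonempty and $P\in PSH(\Omega)\cap L^\infty(\Omega)$ satisfies $P\le u$ and $(dd^cP)^n\ge f\,d\lambda$. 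It remains to show $P\in C(\overline{\Omega})$.

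\textbf{Boundary continuity.} Since $P\le u$ with $u$ continuous, $\limsup_{z\to\xi}P(z)\le u(\xi)$ for every $\xi\in\partial\Omega$. For the reverse inequality I would exploit strict pseudoconvexity: let $\rho$ be a smooth strictly plurisubharmonic defining function with $\Omega=\{\rho<0\}$. Fix $\xi\in\partial\Omega$ and $\epsilon>0$, and let $h_\epsilon\in PSH(\Omega)\cap C(\overline{\Omega})$ solve $(dd^ch_\epsilon)^n=f\,d\lambda$ with $h_\epsilon=u-\epsilon$ on $\partial\Omega$ (solvable by \cite{Kol98} because $\Omega$ is smooth strictly pseudoconvex, $f\in L^p$ and $u-\epsilon\in C(\partial\Omega)$). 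As $h_\epsilon-u\to-\epsilon<0$ along $\partial\Omega$ while $-\rho$ is bounded below on compact subsets of $\Omega$, the quotient $(h_\epsilon-u)/(-\rho)$ is bounded above on $\Omega$; choosing $A$ larger than this bound gives $w:=h_\epsilon+A\rho\le u$ on $\Omega$. Since $\rho$ is plurisubharmonic, $(dd^cw)^n\ge(dd^ch_\epsilon)^n=f\,d\lambda$, so $w$ lies in the defining family and $P\ge w$. As $w\in C(\overline{\Omega})$ and $w(\xi)=u(\xi)-\epsilon$, we obtain $\liminf_{z\to\xi}P(z)\ge u(\xi)-\epsilon$, and letting $\epsilon\to0$ gives $\lim_{z\to\xi}P(z)=u(\xi)$. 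Thus $P$ extends continuously to $\partial\Omega$ with $P|_{\partial\Omega}=u$.

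\textbf{Interior continuity.} Adapting Berman's approximation scheme \cite{Ber19}, I would introduce for $\beta>0$ the penalized Dirichlet problems
\begin{equation*}
(dd^cu_\beta)^n=\big(f+\beta(u_\beta-u)_+\big)\,d\lambda,\qquad u_\beta=u\ \text{ on }\ \partial\Omega,
\end{equation*}
whose right-hand side enforces $(dd^cu_\beta)^n\ge f\,d\lambda$ while penalizing the region $\{u_\beta>u\}$. Using \cite{Kol98} together with a sub/supersolution iteration, each $u_\beta$ is a bounded continuous plurisubharmonic solution; the family is monotone decreasing in $\beta$, and as $\beta\to\infty$ the penalty forces $u_\beta\le u$ and $(dd^cu_\beta)^n=f\,d\lambda$ off the contact set, so that $u_\beta\downarrow P$. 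Each $u_\beta$ being continuous, the limit $P$ is automatically upper semicontinuous; to get continuity I would establish a quantitative comparison estimate of the form $0\le u_\beta-P\le C\beta^{-1/n}$ (or any bound tending to $0$ uniformly), via the comparison principle for the complex Monge--Amp\`ere operator applied to $u_\beta$ and competitors built from $P$ and the barrier $w$ above. Such a $\beta$-uniform bound makes $u_\beta\to P$ uniform on $\overline{\Omega}$, and a uniform limit of continuous functions is continuous.

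The main obstacle is precisely this last, quantitative step: monotone convergence alone yields only upper semicontinuity, so the weight of the proof rests on a $\beta$-uniform modulus of continuity for $u_\beta$, equivalently on controlling $P$ near the contact set $\{P=u\}$. Away from the contact set $P$ solves $(dd^cP)^n=f\,d\lambda$ with $L^p$ density and is continuous by Kolodziej's interior continuity for the Monge--Amp\`ere operator; the difficulty is that the contact set is only known to be closed, so continuity there must be extracted from the stability of the operator rather than from local regularity. I expect this stability estimate, combined with the boundary barrier, to be the technical core of the argument.
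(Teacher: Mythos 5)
Your boundary analysis is fine (the barrier $w=h_\epsilon+A\rho$ is a legitimate competitor, so $P$ attains the boundary values $u$ continuously), but the interior continuity argument has a genuine gap, and you have located it yourself without closing it: the entire proof hinges on the claimed estimate $0\le u_\beta-P\le C\beta^{-1/n}$, which you do not prove and for which no route is indicated that avoids circularity. Monotone convergence $u_\beta\downarrow P$ of continuous functions only gives that $P$ is upper semicontinuous (which it is by definition, being an envelope); to upgrade to uniform convergence you would either need Dini's theorem --- which requires already knowing the limit $P$ is continuous --- or a $\beta$-uniform modulus of continuity for $u_\beta$, which is essentially equivalent to the theorem being proved. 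For merely continuous $u$ and $f\in L^p$ there is no obvious comparison-principle argument producing such a rate: the natural estimate coming from the penalization is only $\|\beta(u_\beta-u)_+\|_{L^1}\le C$ (bounded total Monge--Amp\`ere mass), which controls how much $u_\beta$ exceeds $u$, not the sup-norm distance from $u_\beta$ to $P$. In addition, the existence, continuity, and $\beta$-monotonicity of solutions to $(dd^cu_\beta)^n=(f+\beta(u_\beta-u)_+)\,d\lambda$ with $L^p$ density cannot be delegated to the viscosity comparison principles (those require continuous right-hand sides), so even the setup of your scheme needs a pluripotential fixed-point argument that is not supplied.

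The paper avoids this difficulty entirely by approximating the \emph{data} rather than the \emph{equation}. For smooth $u,f\in C^\infty(\overline{\Omega})$ it proves that $P(u,f,\Omega)$ is Lipschitz by a translation argument: comparing $P(u,f,\Omega)(a)$ and $P(u,f,\Omega)(b)$ via the biholomorphism $z\mapsto z+b-a$ (Lemma \ref{lem2 main0}), the Lipschitz bounds on $u$ and $f$, the $L^p$-stability estimate (Proposition \ref{prop4}, from Kolodziej/GKZ), and a domain-shrinking estimate $P(u,f,\Omega_\delta)\le P(u,f,\Omega)+C\delta$ (Lemma \ref{lem1 main0}) obtained from an explicit barrier $M\rho+u$. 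The general case then follows because the envelope is \emph{uniformly} stable in its data: $|P(u,\mu,\Omega)-P(v,\mu,\Omega)|\le\sup|u-v|$ and $|P(u,f,\Omega)-P(u,g,\Omega)|\le C\|f-g\|_{L^p}^{1/n}$, so approximating $u$ uniformly and $f$ in $L^p$ by smooth data gives $P(u,f,\Omega)$ as a uniform limit of continuous functions. This is exactly the uniform-convergence mechanism your scheme lacks: stability in the data comes with sup-norm bounds for free, whereas penalized approximation of the equation only comes with monotone convergence. If you want to salvage your approach, you would need to first prove the theorem for smooth data by some other means and then run your scheme; at that point the paper's approximation argument is both shorter and already complete.
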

\begin{Cor}
Assume that $\Omega\subset\C^n$ is a smooth strictly pseudoconvex domain and $U\subset\Omega$
is a hyperconvex domain.  Then, for every $E\Subset U$, for each $0\leq f\in L^p(\Omega), p>1,$
if $P(-\chi_E, f, U)$ is continuous then $P(-\chi_E, f, \Omega)$ is continuous.
\end{Cor}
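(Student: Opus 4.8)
The plan is to convert the discontinuous obstacle problem on $\Omega$ into a \emph{continuous} one supported by the given data on $U$, and then quote Theorem \ref{main0}. Write $\phi_U:=P(-\chi_E,f,U)$, which is continuous by hypothesis, and define $\tilde\phi\colon\overline\Omega\to\R$ by setting $\tilde\phi:=\phi_U$ on $U$ and $\tilde\phi:=0$ on $\overline\Omega\setminus U$. The heart of the argument is the identity
\[
P(-\chi_E,f,\Omega)=P(\tilde\phi,f,\Omega),
\]
which I would prove by showing that the two admissible families coincide. Indeed, if $v\in PSH(\Omega)\cap L^\infty(\Omega)$ satisfies $(dd^cv)^n\ge f\,d\lambda$ and $v\le-\chi_E$, then $v|_U$ is admissible for $\phi_U$, so $v\le\phi_U=\tilde\phi$ on $U$, while $v\le 0=\tilde\phi$ on $\Omega\setminus U$; hence $v\le\tilde\phi$ on $\Omega$. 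Conversely $\tilde\phi\le-\chi_E$ (on $E$ this reads $\phi_U\le-1$, on $U\setminus E$ it reads $\phi_U\le 0$, and off $U$ it is trivial), so any $v\le\tilde\phi$ also satisfies $v\le-\chi_E$. Thus the two competitor classes are equal and the two envelopes agree.

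Granting the identity, the conclusion is immediate: since $\Omega$ is smooth strictly pseudoconvex, $0\le f\in L^p(\Omega)$, and $\tilde\phi\in C(\overline\Omega)$, Theorem \ref{main0} yields $P(\tilde\phi,f,\Omega)\in C(\overline\Omega)$, and therefore $P(-\chi_E,f,\Omega)=P(\tilde\phi,f,\Omega)$ is continuous.

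The main obstacle, and where all the remaining hypotheses are consumed, is to verify that $\tilde\phi$ is \emph{genuinely} continuous on $\overline\Omega$. Continuity is clear on $U$ (by assumption) and on $\Omega\setminus\overline U$ (where $\tilde\phi\equiv 0$), so the only issue is matching across $\partial U$, i.e.\ proving $\phi_U\to 0$ at each boundary point of $U$. This is exactly the point at which hyperconvexity of $U$ (together with $E\Subset U$) enters, and I would supply a continuous lower barrier. Using \cite{Kol98}, solve $(dd^c\psi)^n=f\,d\lambda$ on the hyperconvex domain $U$ with $\psi=0$ on $\partial U$, obtaining $\psi\in C(\overline U)$ with $\psi\le 0$; and, fixing an open set with $E\Subset V\Subset U$ and $\overline V\Subset U$, let
\[
\omega_V:=\big(\sup\{v\in PSH(U)\cap L^\infty(U):v\le 0,\ v\le-1 \text{ on } \overline V\}\big)^*
\]
be the relative extremal function of $\overline V$, which for hyperconvex $U$ is continuous on $\overline U$ with $\omega_V=-1$ on $E$ and $\omega_V=0$ on $\partial U$. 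Then $w:=\psi+\omega_V$ is a bounded plurisubharmonic competitor for $\phi_U$: one has $(dd^cw)^n\ge(dd^c\psi)^n=f\,d\lambda$ because adding a plurisubharmonic function only increases the Monge--Amp\`ere mass, and $w\le-1$ on $E$, $w\le 0$ on $U$. Hence $w\le\phi_U\le 0$, and since $w\in C(\overline U)$ with $w=0$ on $\partial U$, a squeeze as $z\to\partial U$ forces $\phi_U\to 0$. This makes $\tilde\phi$ continuous across $\partial U$, completing the proof.

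One technical remark: the inequality $\tilde\phi\le-\chi_E$ is literal when $-\chi_E$ is upper semicontinuous, in particular when $E$ is open, which is the relevant case; for a general relatively compact $E$ the same argument applies after replacing $-\chi_E$ by its upper semicontinuous regularization, a change that affects neither envelope since the two obstacles differ only on the negligible set $\partial E$.
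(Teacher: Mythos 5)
Your strategy---glue $\tilde\phi$ from $\phi_U=P(-\chi_E,f,U)$ and $0$, prove $P(-\chi_E,f,\Omega)=P(\tilde\phi,f,\Omega)$, show $\tilde\phi\in C(\overline\Omega)$ by a barrier argument using hyperconvexity, then quote Theorem \ref{main0}---is the natural deduction here (the paper states this corollary without proof), and the skeleton is sound. But one step is wrong as written, and your patch for it is also wrong. The inequality $\tilde\phi\le-\chi_E$ need not hold pointwise: $\phi_U$ is an upper semicontinuous regularization, so Proposition \ref{prop1} only gives $\phi_U\le-1$ \emph{quasi-everywhere} on $E$ (for $E=\overline{B}\cup\{p\}$ with $p$ an isolated point, one can have $\phi_U(p)>-1$ even with $\phi_U$ continuous). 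Your repair---replace $-\chi_E$ by its upper semicontinuous regularization, claiming this ``affects neither envelope since the two obstacles differ only on the negligible set $\partial E$''---fails: the corollary allows arbitrary $E\Subset U$, the set $\partial E$ is in general not pluripolar, and the envelopes genuinely change. Take $f\equiv0$ and $E$ a non-pluripolar compact set with empty interior (e.g.\ a closed ball of $\R^n\subset\C^n$): the regularized obstacle is $\equiv 0$, so one envelope is $0$, while the other is the relative extremal function $u^*_{E,\Omega}\not\equiv 0$. The correct repair is supplied by the paper itself: by Proposition \ref{prop1}, $P(-\chi_E,f,\Omega)$ equals the supremum of the class $T$ of bounded psh functions $v$ with $(dd^cv)^n\ge f\,d\lambda$ and $v\le-\chi_E$ merely quasi-everywhere; every competitor $v$ for $P(\tilde\phi,f,\Omega)$ satisfies $v\le\tilde\phi\le-\chi_E$ q.e., hence lies in $T$, hence $v\le P(-\chi_E,f,\Omega)$, and the identity follows without touching the obstacle.

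The barrier step also leans on a tool your citation does not provide: you solve $(dd^c\psi)^n=f\,d\lambda$ with $\psi\in C(\overline U)$, $\psi|_{\partial U}=0$, on the hyperconvex domain $U$ ``using \cite{Kol98}'', but the Dirichlet results of \cite{Kol98} (in particular Corollary 3.1.3, the only one this paper invokes) are proved on smooth strictly pseudoconvex domains, not on arbitrary bounded hyperconvex ones. The fact you need is true, but you must either cite the hyperconvex literature (Cegrell, B{\l}ocki) or argue around it within the paper's toolkit, e.g.: write $f\chi_U=f\chi_K+f\chi_{U\setminus K}$ with $K\Subset U$ compact, $E\subset K$, and $\|f\chi_{U\setminus K}\|_{L^p}$ so small that the solution $\psi_2$ on a large ball $D\supset\Omega$ with density $f\chi_{U\setminus K}$ satisfies $\|\psi_2\|_\infty\le\epsilon$; let $\psi_1$ solve on $D$ with density $f\chi_K$, let $\rho$ be the negative continuous psh exhaustion of $U$ and $\delta=-\sup_K\rho>0$; then $w=\max\{\psi_1-\epsilon,\ \delta^{-1}(\sup_K|\psi_1|+\epsilon)\rho\}+\psi_2+\delta^{-1}\rho$ is a competitor for $\phi_U$ with $\liminf_{z\to\partial U}w(z)\ge-\epsilon$, which gives the squeeze since $\epsilon$ is arbitrary. (Relatedly, continuity of your $\omega_V$ on $\overline U$ is not automatic---it requires $\overline V$ to be pluriregular---but it is also not needed: the bound $\omega_V\ge\delta^{-1}\rho$ already forces $\omega_V\to0$ at $\partial U$.)
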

\section{Some general properties}
In this section, we give some properties of $P(u, \mu, \Omega)$, mainly about the convergence and stability.
Some of them have been proved in \cite{GLZ19} for the case $\mu=0$.
\begin{Prop}\label{prop1}
	Let $u$ be a bounded function on $\Omega$ and $\mu\in\M.$ Denote 
 \begin{center}
 	$T= \{v\in PSH(\Omega)\cap L^{\infty}(\Omega):v\leq u$ quasi everywhere,
 	$(dd^cv)^n\geq\mu \}$.
 \end{center}
Then $P(u, \mu, \Omega)\in T$. Moreover, $P(u, \mu, \Omega)=\sup\{v: v\in T \}$.
\end{Prop}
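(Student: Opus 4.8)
The plan is to compare the family defining $P(u,\mu,\Omega)$,
\[
S:=\{v\in PSH(\Omega)\cap L^\infty(\Omega):\ v\le u,\ (dd^cv)^n\ge\mu\},
\]
with the larger family $T$, in which $v\le u$ is required only quasi everywhere. Since $S\subseteq T$ one always has $\sup S\le\sup T$, so the point of the proposition is that relaxing ``$v\le u$ everywhere'' to ``$v\le u$ q.e.'' changes neither the envelope nor its regularity. First I would record that $S$ is nonempty and stable under maxima: if $\varphi$ is a bounded psh potential with $(dd^c\varphi)^n=\mu$ (which exists precisely because $\mu\in\M$), then $\varphi-C\in S$ for $C$ large, and for $v_1,v_2\in S$ the function $\max(v_1,v_2)$ again lies in $S$ by the standard inequality $(dd^c\max(v_1,v_2))^n\ge\mu$. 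Consequently $\sup S$ is squeezed between $\varphi-C$ and $\sup_\Omega u$, so its upper semicontinuous regularization $P:=P(u,\mu,\Omega)=(\sup S)^*$ is a bounded psh function.

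Next I would show $P\in T$. For the bound $P\le u$ q.e., note that $\sup S\le u$ pointwise and that the negligible set $\{(\sup S)^*>\sup S\}$ is pluripolar (Bedford--Taylor), hence of zero capacity; thus $P=\sup S\le u$ off a pluripolar set. For the Monge--Amp\`ere inequality, since $S$ is directed upward I would invoke Choquet's lemma to extract an increasing sequence $v_j\in S$ with $(\sup_j v_j)^*=P$, and then pass to the limit using the continuity of the Monge--Amp\`ere operator along increasing sequences of bounded psh functions: $(dd^cv_j)^n\to(dd^cP)^n$ weakly, and since each $(dd^cv_j)^n\ge\mu$ the limit satisfies $(dd^cP)^n\ge\mu$. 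This already gives $P\in T$, hence $\sup T\ge P$.

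The reverse inequality $\sup T\le P$ is the heart of the matter, and the main obstacle is exactly the passage from ``q.e.'' to ``everywhere.'' Fix $v\in T$; the set $N:=\{v>u\}$ has zero capacity, so by Josefson's theorem it is globally pluripolar and there is $h\in PSH(\Omega)$ with $h\le0$, $h\not\equiv-\infty$, and $N\subseteq\{h=-\infty\}$. The idea is to use $h$ to push the bad set $N$ down to $-\infty$ while staying among \emph{bounded} competitors. For $\epsilon>0$ and $k$ large I would set
\[
v_\epsilon^{(k)}:=\max\bigl(v+\epsilon h,\ \varphi-k\bigr),
\]
which is bounded psh (bounded below by $\varphi-k$), satisfies $(dd^cv_\epsilon^{(k)})^n\ge\mu$ (a maximum of two functions each of Monge--Amp\`ere mass $\ge\mu$, using $(dd^c(v+\epsilon h))^n\ge(dd^cv)^n\ge\mu$ on the open set where the first argument dominates), and, once $k\ge\sup_\Omega\varphi-\inf_\Omega u$, satisfies $v_\epsilon^{(k)}\le u$ everywhere: on $N$ the first argument is $-\infty$, so $v_\epsilon^{(k)}=\varphi-k\le u$, while off $N$ one has both $v+\epsilon h\le v\le u$ and $\varphi-k\le u$. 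Hence $v_\epsilon^{(k)}\in S$, so $v_\epsilon^{(k)}\le P$. Letting $k\to\infty$ gives $v+\epsilon h\le P$, and letting $\epsilon\to0$ gives $v\le P$ off the pluripolar set $\{h=-\infty\}$, i.e.\ $v\le P$ q.e. Since $v$ and $P$ are both psh and a q.e.\ inequality between psh functions holds Lebesgue-almost everywhere and hence everywhere, I conclude $v\le P$. Taking the supremum over $v\in T$ gives $\sup T\le P$, and combined with $P\in T$ this yields $P(u,\mu,\Omega)=\sup\{v:v\in T\}$, completing the proof.
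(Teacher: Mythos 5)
Your proof has the same architecture as the paper's: first $P(u,\mu,\Omega)\in T$ via max-stability of the defining family $S$, Choquet's lemma, and Bedford--Taylor convergence along increasing sequences; then $\sup T\le P(u,\mu,\Omega)$ by converting an arbitrary $v\in T$ into competitors lying in $S$ with the help of a psh function $h$ equal to $-\infty$ on the exceptional set $\{v>u\}$. The only real divergence is the shape of the competitor, and that is also the one step where your argument is not rigorous as written. The paper takes $v_\epsilon:=v+\max\{\epsilon h,-M\}$ with $M=\sup|u-v|$: this is a sum of two \emph{bounded} psh functions, so $(dd^cv_\epsilon)^n\ge(dd^cv)^n\ge\mu$ is immediate from multilinearity and the positivity of mixed Bedford--Taylor products, and $v_\epsilon\le u$ everywhere is checked exactly as you do. Your competitor $\max(v+\epsilon h,\varphi-k)$ instead involves the function $v+\epsilon h$, which is not locally bounded, so $(dd^c(v+\epsilon h))^n$ is not defined in the Bedford--Taylor sense; moreover your parenthetical justification ``on the open set where the first argument dominates'' fails, because $\{v+\epsilon h>\varphi-k\}$ need not be open: $\varphi$ is only a bounded psh function, hence merely upper semicontinuous, not continuous. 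The step is nonetheless true and salvageable by truncation: $\max(v+\epsilon\max\{h,-j\},\varphi-k)$ is a maximum of two bounded psh functions each with Monge--Amp\`ere measure $\ge\mu$, hence itself has Monge--Amp\`ere measure $\ge\mu$, and it decreases to $\max(v+\epsilon h,\varphi-k)$ as $j\to\infty$, so Bedford--Taylor convergence along decreasing sequences yields the required inequality. With this repair (or, more simply, by adopting the paper's competitor, which keeps everything bounded and makes the parameter $k$ unnecessary), the remainder of your argument --- $v+\epsilon h\le P$, hence $v\le P$ quasi everywhere, hence almost everywhere, hence everywhere by the sub-mean value characterization of psh functions --- is correct.
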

Here {\it $v\leq u$ quasi everywhere } means that there exists a pluripolar set $N$ such that $v\leq u$
on $\Omega\setminus N$.
\begin{proof}
	Since negligible sets are pluripolar \cite{BT82}, we have 
	$P(u, \mu, \Omega)= \sup_{v\in S}v$ quasi everywhere, where
	\begin{center}
		$S=\{v\in PSH(\Omega)\cap L^{\infty}(\Omega):v\leq u, (dd^cv)^n\geq \mu \}.$
	\end{center}
	 Hence, $P(u, f, \Omega)\leq u$ quasi everywhere. 
	 
	 By Choquet lemma, there exists a sequence of functions $u_j\in S$ such that 
	 $P(u, f, \Omega)=(\sup_ju_j)^*$. Note that if $v,w \in PSH(\Omega)\cap L^{\infty}(\Omega)$
	 and $(dd^cv)^n, (dd^cw)^n\geq \mu$ then $(dd^c\max\{v, w \})^n\geq \mu$. Hence 
	 $(dd^c(\max_{j\leq k}u_j))^n\geq\mu$ for every $k$. Letting $k\rightarrow\infty$ and using
	 \cite[Theorem 2.6]{BT82}(see also \cite[Theorem 3.6.1]{Kli91}), we get
	 $(dd^cP(u, f, \Omega))^n\geq \mu$. Then $P(u, f, \Omega)\in T$.
	 
	 Now, let $v$ be an arbitrary element of $T$. Then there exists $\varphi\in PSH^-(\Omega)$ such that
	 $\{v>u\}\subset\{\varphi=-\infty\}$. Denote $M=\sup |u-v|$. We have
	  $$v_{\epsilon}:=v+\max\{\epsilon\varphi,-M \}\in S,$$
	  for every $\epsilon>0$.  Letting $\epsilon\searrow 0$, we obtain
	  \begin{center}
	  	$v=(\lim\limits_{\epsilon\to 0^+}v_{\epsilon})^*\leq P(u, \mu, \Omega)$.
	  \end{center}
  Thus $P(u, \mu, \Omega)=\sup\{v: v\in T \}$.
\end{proof}
\begin{Cor}
		Let $u$ be a bounded function on $\Omega$ and $\mu\in\M.$ Then
		\begin{center}
		$P(u, \mu, \Omega)=P(P(u, 0, \Omega), \mu, \Omega).$	
		\end{center}
\end{Cor}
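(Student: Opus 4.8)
The plan is to show that the two classes of competitors produced by Proposition \ref{prop1} for the left- and right-hand envelopes actually coincide, so that their suprema agree. Write $\phi := P(u, 0, \Omega)$; since $0 = (dd^c c)^n \in \M$ for any constant $c$, this envelope is well defined, and because the constraint $(dd^c v)^n \ge 0$ is automatic for $v \in PSH(\Omega)$, $\phi$ is simply the usual bounded psh envelope of $u$. Applying Proposition \ref{prop1} with $\mu = 0$, I record the envelope as an honest pointwise supremum,
\[
\phi = \sup\{ v \in PSH(\Omega)\cap L^{\infty}(\Omega) : v \le u \ \text{q.e.}\},
\]
and note in particular that $\phi \le u$ quasi everywhere and $\phi \in PSH(\Omega)\cap L^{\infty}(\Omega)$, so that $P(\phi, \mu, \Omega)$ makes sense.

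Next I introduce the two competitor classes furnished by Proposition \ref{prop1}:
\[
T_u = \{ v \in PSH(\Omega)\cap L^{\infty}(\Omega): v \le u \ \text{q.e.},\ (dd^c v)^n \ge \mu\},
\]
\[
T_\phi = \{ v \in PSH(\Omega)\cap L^{\infty}(\Omega): v \le \phi \ \text{q.e.},\ (dd^c v)^n \ge \mu\},
\]
so that $P(u, \mu, \Omega) = \sup T_u$ and $P(\phi, \mu, \Omega) = \sup T_\phi$. The inclusion $T_\phi \subseteq T_u$ is immediate: if $v \le \phi$ off a pluripolar set and $\phi \le u$ off a pluripolar set, then $v \le u$ off the union of these two sets, which is again pluripolar.

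For the reverse inclusion, I take $v \in T_u$. Then $v$ is a bounded psh function with $v \le u$ quasi everywhere, hence $v$ lies in the family whose supremum is $\phi$ in the displayed formula above; consequently $v \le \phi$ at every point of $\Omega$, and a fortiori $v \le \phi$ quasi everywhere. Since $v$ still satisfies $(dd^c v)^n \ge \mu$, I conclude $v \in T_\phi$. Thus $T_u = T_\phi$, and taking suprema yields $P(u, \mu, \Omega) = P(\phi, \mu, \Omega) = P(P(u, 0, \Omega), \mu, \Omega)$.

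The argument is short, so there is no serious obstacle; the only point requiring care is the reverse inclusion, where it is essential to invoke the reformulation in Proposition \ref{prop1} presenting the $\mu = 0$ envelope as a genuine supremum over all bounded psh functions dominated by $u$ quasi everywhere. Without this reformulation one would control $v$ only against $u$, not against $\phi$, and extracting the pointwise bound $v \le \phi$ (rather than a merely quasi-everywhere bound) directly from the upper-regularized definition of $\phi$ would be awkward.
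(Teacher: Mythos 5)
Your proof is correct: the paper states this Corollary without proof as an immediate consequence of Proposition \ref{prop1}, and your argument—identifying the competitor classes $T_u$ and $T_\phi$ via the quasi-everywhere reformulation of Proposition \ref{prop1} and showing they coincide—is precisely the intended argument. The care you take in the reverse inclusion (using that $\phi$ is the genuine pointwise supremum of the quasi-everywhere class, so $v\le\phi$ everywhere) is exactly the point that makes the Corollary a one-line consequence of the Proposition.
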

\begin{Prop}\label{prop2}
	Let $u$ be a bounded function on $\Omega$ and $\mu\in\M.$  If $\Omega_j$ is an increasing sequence of relative compact domains in $\Omega$ such that $\cup_j\Omega_j=\Omega$ then
	$P(u, \mu, \Omega_j)$ decreases to $P(u, \mu, \Omega)$.
\end{Prop}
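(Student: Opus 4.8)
The plan is to establish the two inequalities $P(u,\mu,\Omega)\le\lim_j P(u,\mu,\Omega_j)$ and $\lim_j P(u,\mu,\Omega_j)\le P(u,\mu,\Omega)$, after first checking that the sequence is genuinely decreasing. Throughout, $P(u,\mu,\Omega_j)$ is understood with $\mu$ replaced by its restriction to $\Omega_j$, and this restriction lies in $\M$ on $\Omega_j$: if $\mu=(dd^c\varphi)^n$ with $\varphi\in PSH(\Omega)\cap L^{\infty}(\Omega)$, then $\varphi|_{\Omega_j}\in PSH(\Omega_j)\cap L^{\infty}(\Omega_j)$ and $(dd^c\varphi|_{\Omega_j})^n=\mu$ on $\Omega_j$. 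For the monotonicity, if $j<k$ then any competitor $v$ for $P(u,\mu,\Omega_k)$ restricts to a competitor for $P(u,\mu,\Omega_j)$, since the conditions $v\in PSH\cap L^{\infty}$, $v\le u$ and $(dd^cv)^n\ge\mu$ are all preserved upon restriction to the smaller domain $\Omega_j$. Hence $P(u,\mu,\Omega_k)\le P(u,\mu,\Omega_j)$ on $\Omega_j$, so the functions decrease pointwise; since every $z\in\Omega$ lies in $\Omega_j$ for all large $j$, the limit $\phi:=\lim_j P(u,\mu,\Omega_j)$ is well defined on $\Omega$.

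For the first inequality I would invoke Proposition \ref{prop1}, according to which $P(u,\mu,\Omega)$ is a bounded psh function on $\Omega$ with $P(u,\mu,\Omega)\le u$ quasi everywhere and $(dd^cP(u,\mu,\Omega))^n\ge\mu$. Restricting to $\Omega_j$ produces an element of the corresponding class $T$ on $\Omega_j$, so Proposition \ref{prop1} applied on $\Omega_j$ gives $P(u,\mu,\Omega)\le P(u,\mu,\Omega_j)$ on $\Omega_j$. Letting $j\to\infty$ yields $P(u,\mu,\Omega)\le\phi$.

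For the reverse inequality I would verify that $\phi$ itself belongs to the class $T$ on $\Omega$ and then apply Proposition \ref{prop1} again. The function $\phi$ is psh and bounded: on each $\Omega_j$ it is the decreasing limit of the bounded psh functions $P(u,\mu,\Omega_k)$ with $k\ge j$, it is bounded below by $P(u,\mu,\Omega)$ and above by $\sup_{\Omega}u$, hence not identically $-\infty$. Moreover $\phi\le u$ quasi everywhere, because each $P(u,\mu,\Omega_j)\le u$ off a pluripolar set $N_j$ and the countable union $\cup_j N_j$ is again pluripolar. The key point is $(dd^c\phi)^n\ge\mu$: fixing $j$, on $\Omega_j$ the functions $P(u,\mu,\Omega_k)$ $(k\ge j)$ decrease to $\phi$ and satisfy $(dd^cP(u,\mu,\Omega_k))^n\ge\mu$, so by the convergence of the Monge--Amp\`ere operator along decreasing sequences of bounded psh functions (\cite[Theorem 2.6]{BT82}) we have $(dd^cP(u,\mu,\Omega_k))^n\to(dd^c\phi)^n$ weakly; since a weak limit of measures dominating $\mu$ still dominates $\mu$, we get $(dd^c\phi)^n\ge\mu$ on $\Omega_j$, and as $j$ is arbitrary this holds on all of $\Omega$. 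Thus $\phi\in T$, whence $\phi\le P(u,\mu,\Omega)$ by Proposition \ref{prop1}, and combining with the first inequality gives $\phi=P(u,\mu,\Omega)$.

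The main obstacle is this last step, namely transferring the lower bound $(dd^cw)^n\ge\mu$ through the decreasing limit. It requires the Bedford--Taylor continuity of the complex Monge--Amp\`ere operator under decreasing sequences and a careful organization of the argument locally on the exhausting domains $\Omega_j$, since the approximants $P(u,\mu,\Omega_j)$ are only defined on increasing subdomains rather than on all of $\Omega$; the bookkeeping with the quasi-everywhere inequalities and the pluripolar exceptional sets must be handled at the same time.
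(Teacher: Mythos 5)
Your proof is correct and takes essentially the same route as the paper's: show the sequence decreases, let $v$ be its limit, verify that $v$ is a bounded psh function with $v\le u$ quasi everywhere and $(dd^cv)^n\ge\mu$, and conclude $v\le P(u,\mu,\Omega)$ from the last assertion of Proposition \ref{prop1}, the reverse inequality being immediate. You merely spell out details the paper leaves implicit (the restriction argument for monotonicity, the Bedford--Taylor convergence theorem for decreasing sequences, and the countable union of pluripolar exceptional sets), all of which are handled correctly.
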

\begin{proof}
	By the definition, we have 
	\begin{center}
		$P(u, \mu, \Omega)\leq P(u, \mu, \Omega_{j+1})\leq P(u, \mu, \Omega_j),$
	\end{center}
on $\Omega_j$ for every $j$. Denote $v=\lim\limits_{j\to\infty}P(u, \mu, \Omega_j)$. Then
$v$ is a bounded plurisubharmonic function on $\Omega$ satisfying
\begin{equation}\label{eq1 prop2}
P(u, \mu, \Omega)\leq v,
\end{equation}
and
\begin{equation}\label{eq2 prop2}
(dd^cv)^n\geq \mu.
\end{equation}
It follows from Proposition \ref{prop1}, 
that $P(u, \mu, \Omega_j)\leq u$ quasi everywhere on $\Omega_j$. Then $v\leq u$ quasi everywhere
on $\Omega$. Hence, by the last assertion of Proposition \ref{prop1} and by \eqref{eq2 prop2}, we get
\begin{equation}\label{eq3 prop2}
v\leq P(u, \mu, \Omega).
\end{equation}
Combining \eqref{eq1 prop2} and \eqref{eq3 prop2}, we obtain $v=P(u, \mu, \Omega)$.
 Thus $P(u, \mu, \Omega_j)$ decreases to $P(u, \mu, \Omega)$ as $j\rightarrow\infty$.
\end{proof}
\begin{Prop}\label{prop3}
	Let $u, u_j (j\in\Z^+)$ be bounded functions on $\Omega$ and $\mu\in\M.$  Then
	the following statements hold:
	\item [(i)] If $u_j$ decreases to  $u$ as $j\rightarrow\infty$ then $P(u_j, \mu, \Omega)$
	decreases to $P(u, \mu, \Omega)$.
	\item [(ii)] Assume that $u_j$ is continuous for every $j$. 
	If $u_j$ increases to  $u$ as $j\rightarrow\infty$ then $P(u_j, \mu, \Omega)$
	increases to $P(u, \mu, \Omega)$ almost everywhere.
\end{Prop}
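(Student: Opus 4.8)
The plan is to treat the two monotone regimes by the same two-step template used for Proposition \ref{prop2}: identify the monotone limit $w$, show that it is admissible (bounded plurisubharmonic with $(dd^cw)^n\ge\mu$ and $w\le u$ quasi everywhere) so that $w\le P(u,\mu,\Omega)$ by Proposition \ref{prop1}, and then establish the reverse inequality by monotonicity or by producing competitors.

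For (i), set $w:=\lim_jP(u_j,\mu,\Omega)$. Since $u_{j+1}\le u_j$, monotonicity of $P$ in the obstacle shows that $P(u_j,\mu,\Omega)$ decreases and stays above $P(u,\mu,\Omega)$, so $w$ is a bounded plurisubharmonic function with $w\ge P(u,\mu,\Omega)$. The Bedford--Taylor continuity of the Monge--Amp\`ere operator along decreasing sequences gives $(dd^cw)^n\ge\mu$. For each $j$, Proposition \ref{prop1} furnishes a pluripolar set $N_j$ with $P(u_j,\mu,\Omega)\le u_j$ off $N_j$; on $\Omega\setminus\bigcup_jN_j$ (still pluripolar) one has $w\le u_j$ for every $j$, hence $w\le u$ quasi everywhere. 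Thus $w$ lies in the class $T$ of Proposition \ref{prop1}, giving $w\le P(u,\mu,\Omega)$, and equality follows.

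For (ii), put $w:=(\sup_jP(u_j,\mu,\Omega))^*$. Exactly as above, $w$ is bounded plurisubharmonic, $(dd^cw)^n\ge\mu$ by the Bedford--Taylor theorem along increasing sequences \cite[Theorem 2.6]{BT82}, and $P(u_j,\mu,\Omega)\le u_j\le u$ forces $w\le u$ quasi everywhere; hence $w\in T$ and $w\le P(u,\mu,\Omega)$. Since $w$ coincides with $\sup_jP(u_j,\mu,\Omega)$ off a pluripolar, hence Lebesgue-null, set, it remains only to prove the reverse bound $P(u,\mu,\Omega)\le w$ almost everywhere. By Proposition \ref{prop1} it suffices to show $v\le w$ almost everywhere for every $v\in T$, and the $\varphi$-perturbation from the proof of Proposition \ref{prop1} reduces this to competitors $v\in PSH(\Omega)\cap L^\infty(\Omega)$ with $v\le u$ everywhere and $(dd^cv)^n\ge\mu$. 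For such a $v$ I would exploit the self-reproducing identity $P(v,\mu,\Omega)=v$ (valid quasi everywhere, since $v$ is itself admissible) together with $\min(v,u_j)\nearrow\min(v,u)=v$: the envelopes $P(\min(v,u_j),\mu,\Omega)$ increase, are dominated by $P(u_j,\mu,\Omega)\le w$, and their monotone limit should recover $v$. Here the continuity of the $u_j$ enters decisively: a Dini-type argument shows that for every $\epsilon>0$ and every compact $K\Subset\Omega$ one has $v-\epsilon\le u_j$ on $K$ for all large $j$, which lets me build, with the help of a fixed global subsolution barrier $b$ (a bounded plurisubharmonic function with $(dd^cb)^n\ge\mu$ and $b\le\inf u_1$, obtained by translating a potential of $\mu$), competitors for $P(u_j,\mu,\Omega)$ that recover $v-\epsilon$ on $K$; letting $j\to\infty$, then $K\uparrow\Omega$ through Proposition \ref{prop2}, and finally $\epsilon\to0$ would yield $v\le w$ almost everywhere.

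The main obstacle is precisely this reverse inequality in (ii). Unlike the decreasing case, the competitor $v$ need not satisfy $v\le u_j$ for any fixed $j$ (only $v\le\sup_ju_j$), so $v$ is not directly admissible for any single $P(u_j,\mu,\Omega)$ and there is no monotonicity shortcut. Globalizing the compact Dini-type control into a genuine global competitor is delicate---naive gluing with the barrier $b$ fails because $b$ lies below $v$---and it is exactly this passage, forced through the continuity of the $u_j$ and the pluripolar regularization inherent to increasing limits of plurisubharmonic functions, that degrades the conclusion from an everywhere identity to one valid only almost everywhere.
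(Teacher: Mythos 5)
Part (i) of your proposal, and the easy inequality $w\le P(u,\mu,\Omega)$ in part (ii), are correct and essentially identical to the paper's argument. The genuine gap is exactly the one you flag yourself: you never establish the reverse inequality $P(u,\mu,\Omega)\le w$, and your sketch (the identity $P(v,\mu,\Omega)=v$, the sequence $\min(v,u_j)$, a global barrier $b$, exhaustion by compacts via Proposition \ref{prop2}) does not close. The idea you are missing is that the paper runs Dini's theorem not on compact subsets $K\Subset\Omega$ but on the compact set $\overline{\Omega}$ itself. Given a competitor $\varphi$ (after the $\epsilon$-perturbation from the proof of Proposition \ref{prop1} one may assume $\varphi\le u$ everywhere), one passes to the upper semicontinuous extension $\hat{\varphi}$ of $\varphi$ to $\overline{\Omega}$; then $\max\{\hat{\varphi}-u_j,0\}$ is a decreasing sequence of upper semicontinuous functions on $\overline{\Omega}$, and---granted that it tends pointwise to $0$ on all of $\overline{\Omega}$---the usc version of Dini's theorem yields uniform convergence. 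Hence for every $\epsilon>0$ there is a single $j$ with $\varphi-\epsilon\le u_j$ on the whole of $\Omega$, so $\varphi-\epsilon$ is itself a global competitor for $P(u_j,\mu,\Omega)$, giving $\varphi-\epsilon\le P(u_j,\mu,\Omega)\le w$; one then lets $\epsilon\to0$ and takes the supremum over $\varphi$. No gluing, barrier, or exhaustion is needed: the point is to get the uniform control on $\overline{\Omega}$ at once.

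That said, your instinct that the globalization is genuinely obstructed is sound---indeed sounder than the paper's own proof, because the parenthetical claim above is unjustified: pointwise convergence $u_j\nearrow u$ on $\Omega$ gives no control at boundary points, since $\lim_{j\to\infty}$ and $\limsup_{z\to\xi}$ need not commute. In fact, statement (ii) is false as written. Take $\Omega$ the unit ball, $\mu=0$, $u\equiv1$ and $u_j(z)=\min\{j(1-|z|),1\}$: these are continuous, bounded, and increase to $u$ pointwise on $\Omega$, but any bounded plurisubharmonic $v\le u_j$ satisfies $\limsup_{z\to\xi}v(z)\le0$ for every $\xi\in\partial\Omega$, hence $v\le0$ in $\Omega$ by the maximum principle, so $P(u_j,\mu,\Omega)\equiv0$ for every $j$, while $P(u,\mu,\Omega)\equiv1$. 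So the step you could not complete cannot be completed from the stated hypotheses; it becomes correct (via the paper's Dini argument) only under an additional assumption forcing convergence up to the boundary, e.g.\ that $u$ extends upper semicontinuously to $\overline{\Omega}$ and $u_j\nearrow u$ pointwise on $\overline{\Omega}$.
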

\begin{proof}
(i) By the definition, we have
\begin{center}
	$P(u, \mu, \Omega)\leq P(u_{j+1}, \mu, \Omega)\leq P(u_j, \mu, \Omega),$
\end{center} 
for every $j$. Then
\begin{equation}\label{eq1 prop3}
v:=\lim\limits_{j\to\infty}P(u_j, \mu, \Omega)\geq P(u, \mu, \Omega).
\end{equation}
Since $(dd^cP(u_j, \mu, \Omega))^n\geq \mu$ for every $j$, we also have
\begin{equation}\label{eq2 prop3}
(dd^cv)^n\geq \mu.
\end{equation}
It follows from Proposition \ref{prop1} that
 $P(u_j, \mu, \Omega)\leq u_j$ quasi everywhere on $\Omega_j$. 
Letting $j\rightarrow\infty$, we get $v\leq u$ quasi everywhere
on $\Omega$. Hence, by the last assertion of Proposition \ref{prop1} and by \eqref{eq2 prop2}, we have
\begin{equation}\label{eq3 prop3}
v\leq P(u, \mu, \Omega).
\end{equation}
Combining \eqref{eq1 prop3} and \eqref{eq3 prop3}, we obtain $v=P(u, \mu, \Omega)$.
Thus $P(u_j, \mu, \Omega)$ decreases to $P(u, \mu, \Omega)$ as $j\rightarrow\infty$.\\

(ii) By the defintion, we have
\begin{center}
	$P(u, \mu, \Omega)\geq P(u_{j+1}, \mu, \Omega)\geq P(u_j, \mu, \Omega),$
\end{center} 
for every $j$. Then
\begin{equation}\label{eq4 prop3}
v:=(\lim\limits_{j\to\infty}P(u_j, \mu, \Omega))^*\leq P(u, \mu, \Omega).
\end{equation}
We will show that $v=\sup_{w\in T}w$, where
 \begin{center}
	$T= \{w\in PSH(\Omega)\cap L^{\infty}(\Omega):w\leq u$ quasi everywhere,
	$(dd^cw)^n\geq \mu \}$.
\end{center}
Since $(dd^cP(u_j, \mu, \Omega))^n\geq \mu$ for every $j$, we  have
\begin{equation}\label{eq5 prop3}
(dd^cv)^n\geq \mu.
\end{equation}
Combining \eqref{eq4 prop3} and \eqref{eq5 prop3} and using Proposition \ref{prop1}, we get that
\begin{equation}\label{eq6 prop3}
v\in T.
\end{equation}
Let $\varphi\in T$. 
 Since $\varphi-u\leq 0$ and $u_j-u\nearrow 0$, we have $\max\{\varphi-u_j, 0 \}$ decreases to $0$.
Denote by $\hat{\varphi}$ 
the upper semicontinuous extension of $\varphi$ to $\overline{\Omega}$, i.e.,
\begin{center}
$\hat{\varphi}(\xi):= \lim_{r\to 0^+} \sup_{B(\xi,r)\cap \Omega} \varphi, \ \forall\xi \in \partial\Omega.$
\end{center}
Then $\max\{\hat{\varphi}-u_j, 0 \}$ decreases to $0$ on $\overline{\Omega}$ as $j\rightarrow\infty$.
It follows from Dini's theorem that $\max\{\varphi-u_j, 0 \}$
converges uniformly on $\overline{\Omega}$ to $0$. Hence, for every $\epsilon>0$, there exists $j$ such that
\begin{center}
$\varphi-\epsilon\leq u_j.$
\end{center}
Then
\begin{center}
	$\varphi-\epsilon\leq P(u_j, \mu, \Omega)\leq v.$
\end{center}
Since $\varphi$ and $\epsilon$ are arbitrary, we get  
\begin{equation}\label{eq7 prop3}
v\geq\sup_{w\in T}w.
\end{equation} 
Combining \eqref{eq6 prop3} and \eqref{eq7 prop3}, we have
\begin{center}
	$v=\sup_{w\in T}w.$
\end{center}
Hence, by Proposition \ref{prop1}, we obtain $v=P(u, \mu, \Omega)$.
Thus, $P(u_j, \mu, \Omega)$
increases to $P(u, \mu, \Omega)$ almost everywhere.
\end{proof} 
\begin{Prop}\label{prop4}
Let $u$ be a bounded function on $\Omega$ and $0\leq f, g\in L^p(\Omega)$ for some $p>1$.
Then, there exists
a uniform constant $C>0$ such that
\begin{center}
	$|P(u, f, \Omega)-P(u, g, \Omega)|\leq C(\|f-g\|_{L^p(\Omega)})^{1/n}.$
\end{center}
\end{Prop}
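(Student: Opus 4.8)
The plan is to establish the two one-sided estimates
\[
P(u,f,\Omega)-P(u,g,\Omega)\le C\,\|f-g\|_{L^p(\Omega)}^{1/n}
\quad\text{and}\quad
P(u,g,\Omega)-P(u,f,\Omega)\le C\,\|f-g\|_{L^p(\Omega)}^{1/n},
\]
which together give the claim. Since $\|f-g\|_{L^p(\Omega)}=\|g-f\|_{L^p(\Omega)}$, it suffices to prove the first inequality, the second following by exchanging the roles of $f$ and $g$. Write $w_f:=P(u,f,\Omega)$ and $w_g:=P(u,g,\Omega)$. The idea is to turn $w_f$ into an admissible competitor for the envelope defining $w_g$ by subtracting a small plurisubharmonic corrector whose Monge--Amp\`ere mass compensates for the deficit between $f$ and $g$.

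To build the corrector I would fix a ball $B\Supset\Omega$ and set $h:=(g-f)^+$, so that $0\le h\in L^p(\Omega)$ and $\|h\|_{L^p(\Omega)}\le\|f-g\|_{L^p(\Omega)}$. Extending $h$ by $0$ to $B$ and applying Kolodziej's theorem \cite{Kol98} on $B$, I obtain a bounded plurisubharmonic function $\rho$ on $B$ solving $(dd^c\rho)^n=h\,d\lambda$ with $\rho=0$ on $\partial B$, together with the stability estimate
\[
\|\rho\|_{L^\infty(B)}\le C_0\,\|h\|_{L^p(B)}^{1/n}\le C_0\,\|f-g\|_{L^p(\Omega)}^{1/n},
\]
where $C_0$ depends only on $B$, $n$ and $p$. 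By the maximum principle $\rho\le 0$ on $B$, and restricting to $\Omega$ yields a bounded plurisubharmonic $\rho\le 0$ with $(dd^c\rho)^n\ge h\,d\lambda$ on $\Omega$.

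I then consider $v:=w_f+\rho$, which is bounded and plurisubharmonic on $\Omega$. Using the superadditivity of the Monge--Amp\`ere operator on bounded plurisubharmonic functions, which follows from the nonnegativity of the mixed terms in the expansion of $(dd^cw_f+dd^c\rho)^n$, I get
\[
(dd^cv)^n\ge (dd^cw_f)^n+(dd^c\rho)^n\ge f\,d\lambda+(g-f)^+d\lambda=\max\{f,g\}\,d\lambda\ge g\,d\lambda .
\]
Moreover $w_f\le u$ quasi everywhere by Proposition \ref{prop1} and $\rho\le 0$, so $v\le w_f\le u$ quasi everywhere. Hence $v$ lies in the family $T$ associated to $(u,g,\Omega)$ in Proposition \ref{prop1}, and the last assertion of that proposition gives $v\le P(u,g,\Omega)=w_g$. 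Consequently $w_f-w_g\le -\rho\le\|\rho\|_{L^\infty(\Omega)}\le C_0\,\|f-g\|_{L^p(\Omega)}^{1/n}$, which is the desired one-sided bound with $C:=C_0$; exchanging $f$ and $g$ finishes the argument.

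The only non-elementary ingredient, and the step I expect to be the main obstacle, is the Kolodziej $L^\infty$-estimate invoked to produce $\rho$ with the sharp exponent $1/n$ and a constant independent of $f,g,u$; the reduction to a fixed ball is what makes this estimate available for an arbitrary bounded $\Omega$. Everything else is a routine use of the definition of the envelope through Proposition \ref{prop1} together with the superadditivity of Monge--Amp\`ere masses.
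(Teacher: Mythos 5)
Your proposal is correct and follows essentially the same route as the paper: both construct an auxiliary plurisubharmonic corrector on a larger domain solving a Monge--Amp\`ere equation with density controlled by $|f-g|$ (the paper uses $\chi_\Omega|f-g|$ on a strictly pseudoconvex $D\Supset\Omega$ via \cite{Kol98}, you use $(g-f)^+$ on a ball and symmetrize), then use superadditivity of $(dd^c\cdot)^n$ together with Proposition \ref{prop1} to compare the envelopes, and finally bound the corrector's sup-norm by $C\|f-g\|_{L^p}^{1/n}$. The only cosmetic difference is that the paper obtains this last bound by applying the estimate of \cite[Theorem 1.1]{GKZ} to the normalized function $\phi/\|f-g\|_{L^p}^{1/n}$, whereas you attribute the homogeneous stability estimate directly to Kolodziej; the underlying scaling argument is the same.
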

\begin{proof}
	Let $D$ be a smooth strictly pseudoconvex domain in $\C^n$ such that $\Omega\Subset D$. Then, by
	\cite[Corollary 3.1.3]{Kol98}, there exists $\phi\in PSH(D)\cap C(\overline{D})$ such that
	$(dd^c\phi)^n=\chi_{\Omega}|f-g|$ and $\phi|_{\partial D}=0$.
	By Proposition \ref{prop1}, we have
	\begin{center}
		$P(u, f, \Omega)+\phi|_{\Omega}\leq P(u, g, \Omega),$
	\end{center}
and
\begin{center}
	$P(u, g, \Omega)+\phi|_{\Omega}\leq P(u, f, \Omega).$
\end{center}
Then
	\begin{equation}\label{eq1 prop4}
	\sup\limits_{\Omega}|P(u, f, \Omega)-P(u, g, \Omega)|\leq \sup\limits_{\Omega}|\phi|.
	\end{equation}
	 Using \cite[Theorem 1.1]{GKZ} for
	$\phi/(\|f-g\|_{L^p(\Omega)})^{1/n}$ and $0$, with $\gamma=0$, we have
	\begin{equation}\label{eq2 prop4}
	\sup\limits_{D}\dfrac{|\phi|}{(\|f-g\|_{L^p(\Omega)})^{1/n}}\leq C,
	\end{equation}
	where $C>0$ is a uniform constant.

		Combining \eqref{eq1 prop4}, \eqref{eq2 prop4}, we get
		\begin{center}
			$|P(u, f, \Omega)-P(u, g, \Omega)|\leq C(\|f-g\|_{L^p(\Omega)})^{1/n}.$
		\end{center}
\end{proof}
\begin{Prop}\label{prop5}
	Let $\Omega\subset\C^n$ be a bounded hyperconvex domain. Assume that
	 $u\in USC(\Omega)\cap L^{\infty}(\Omega)$,
	$v\in  LSC(\Omega)\cap L^{\infty}(\Omega)$, $\mu\in\M$ and $W\Subset\Omega$.
	Denote $M=\sup\limits_{W}|u-v|$. Then
	\begin{center}
		$Cap(\{|P(u, \mu, W)-P(v, \mu, W)|\geq M\epsilon \}, \Omega)
		\leq\dfrac{2(n!)^2}{\epsilon^n}Cap(\{|u-v|\geq\epsilon \}\cap\overline{W}, \Omega),$
	\end{center}
for every $\epsilon>0$.
\end{Prop}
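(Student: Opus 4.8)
The plan is to estimate the two one-sided sets $\{P(u,\mu,W)-P(v,\mu,W)\ge M\epsilon\}$ and $\{P(v,\mu,W)-P(u,\mu,W)\ge M\epsilon\}$ separately and then add the two bounds, the subadditivity of the relative capacity producing the factor $2$. Write $\phi:=P(u,\mu,W)$ and $\psi:=P(v,\mu,W)$; by Proposition \ref{prop1} these are bounded plurisubharmonic functions on $W$ with $(dd^c\phi)^n\ge\mu$, $(dd^c\psi)^n\ge\mu$, and $\phi\le u$, $\psi\le v$ quasi everywhere. Put $E_+:=\{u-v\ge\epsilon\}\cap\overline W$, which is compact since $u-v$ is upper semicontinuous, $E_-:=\{v-u\ge\epsilon\}\cap\overline W$, and $E:=E_+\cup E_-$, a bounded Borel subset of $\Omega$.

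The heart of the argument is a competitor construction transferring the smallness of $u-v$ off $E$ into closeness of the envelopes. First I would record the elementary pointwise inequality $u\le v+\epsilon+M\chi_{E_+}$ on $W$: off $E_+$ one has $u-v<\epsilon$, while on $E_+$ one has $u-v\le M$. Let $h\in PSH(\Omega)$ be the relative extremal function of $E_+$ with respect to $\Omega$, so that $-1\le h\le 0$, $h=-1$ quasi everywhere on $E_+$, $h^*$ tends to $0$ at $\partial\Omega$ (here the hyperconvexity of $\Omega$ and $E_+\Subset\Omega$ are used), and $\int_\Omega(dd^ch)^n=Cap(E_+,\Omega)$. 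Since $M\chi_{E_+}\le -Mh$ quasi everywhere, the function $\phi+Mh-\epsilon$ is bounded plurisubharmonic on $W$, satisfies $\phi+Mh-\epsilon\le u+Mh-\epsilon\le v$ quasi everywhere, and obeys $(dd^c(\phi+Mh))^n\ge(dd^c\phi)^n\ge\mu$ because adding the plurisubharmonic function $Mh$ only increases the Monge--Amp\`ere mass. Hence $\phi+Mh-\epsilon$ belongs to the defining family of $P(v,\mu,W)$, and the last assertion of Proposition \ref{prop1} gives $\phi+Mh-\epsilon\le\psi$, that is $\phi-\psi\le -Mh+\epsilon$ on $W$. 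The symmetric choice of competitor, using $E_-$ and its extremal function, yields $\psi-\phi\le -Mh_-+\epsilon$.

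It then remains to convert these pointwise bounds into a capacity estimate. From $\phi-\psi\le -Mh+\epsilon$ the set $\{\phi-\psi\ge M\epsilon\}$ is contained in a sublevel set $\{h\le -c\}$ of the extremal function, and the standard capacity comparison inequality (a consequence of the comparison principle \cite{BT82}, applicable because $h^*\to 0$ on $\partial\Omega$) bounds $Cap(\{h\le -c\},\Omega)$ by $c^{-n}\int_\Omega(dd^ch)^n=c^{-n}Cap(E_+,\Omega)$. Adding the two one-sided estimates and using $Cap(E_\pm,\Omega)\le Cap(E,\Omega)$ produces a bound of exactly the required shape $\tfrac{2C}{\epsilon^n}Cap(E,\Omega)$.

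The main obstacle is obtaining the \emph{uniform} constant $2(n!)^2$, independent of $M$. The naive competitor above loses an additive $\epsilon$ (off $E$ the two functions still differ by up to $\epsilon$, while the extremal function is nearly $0$ there), so the resulting threshold is $c=(1-1/M)\epsilon$ and the crude constant degenerates as $M\downarrow 1$. Removing this $M$-dependence is where I expect the real work to lie: the clean thresholds $M\epsilon$ versus $\epsilon$, together with the factor $(n!)^2$, should come from replacing the relative-extremal-function step by the sharp Monge--Amp\`ere/capacity comparison inequalities, in which the fully mixed term $dd^c w_1\wedge\cdots\wedge dd^c w_n$ carries a factor $n!$, applied once in each of the two directions. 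A secondary but essential technical point is that $\phi$ and $\psi$ live only on $W$ whereas the capacity is taken with respect to the larger domain $\Omega$; this is precisely why every auxiliary plurisubharmonic function is chosen on $\Omega$ and vanishing at $\partial\Omega$, so that the comparison principle can be run on $\Omega$ with no control of $\phi,\psi$ on $\partial W$.
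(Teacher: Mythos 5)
Your construction is essentially the paper's, with two differences of detail. The paper uses a single relative extremal function, namely $h_E$ for $E=E_1\cup E_2$, where $E_1=\{u-v\geq\epsilon\}\cap\overline{W}$ is compact and $E_2=\{u-v<-\epsilon\}\cap W$ is open; the factor $2$ then comes from subadditivity of the outer capacity, $Cap^*(E)\leq Cap^*(E_1)+Cap^*(E_2)\leq 2\,Cap(\{|u-v|\geq\epsilon\}\cap\overline{W},\Omega)$. (Your $E_-=\{v-u\geq\epsilon\}\cap\overline{W}$ is neither compact nor open, since $v-u$ is only lower semicontinuous, so identifying its capacity with its outer capacity is an extra wrinkle; the paper's choice of one closed and one open set avoids this.) Second, the paper produces the factor $(n!)^2$ by applying Xing's inequality \cite[Lemma 1]{Xin} to the pair $(h_E^*,0)$ and then Chebyshev; your Bedford--Taylor comparison-principle step is equally valid at that point and would in fact give the better constant $2/\epsilon^n$, so no sharp mixed Monge--Amp\`ere inequality is hiding behind $2(n!)^2$.

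The obstacle you flag, however, is not a defect of your write-up: it is precisely where the paper's proof goes wrong, and the proposition as stated is false. The paper asserts that, by the definition of $h_E$, one has $u+Mh_E^*\leq v$ and $v+Mh_E^*\leq u$ quasi everywhere in $W$. This holds quasi everywhere on $E$, where $h_E^*\leq-1$ and $|u-v|\leq M$, but it fails on $W\setminus E$, where $|u-v|$ can be arbitrarily close to $\epsilon$ while $h_E^*$ can be arbitrarily close to $0$; the correct inequalities are the ones you derived, carrying the additive $\epsilon$. That $\epsilon$ cannot be removed by any refinement, because the statement itself fails: take $0<\epsilon<1$, $u\equiv\epsilon/2$, $v\equiv0$, $\mu=0$. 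Then $M=\epsilon/2$, the right-hand side vanishes since $\{|u-v|\geq\epsilon\}=\emptyset$, while $P(u,\mu,W)-P(v,\mu,W)\equiv\epsilon/2\geq M\epsilon$ on $W$, so the left-hand side equals $Cap(W,\Omega)>0$. So you should not keep searching for the trick that restores the threshold $M\epsilon$ --- there is none. What your competitor argument proves, and what the paper's argument proves once the dropped $\epsilon$ is reinstated, is the corrected estimate: for all $\epsilon,\epsilon'>0$,
\begin{center}
	$Cap(\{|P(u,\mu,W)-P(v,\mu,W)|\geq \epsilon+M\epsilon'\},\Omega)\leq\dfrac{2(n!)^2}{(\epsilon')^n}\,Cap(\{|u-v|\geq\epsilon\}\cap\overline{W},\Omega),$
\end{center}
in particular the stated inequality with $(M+1)\epsilon$ in place of $M\epsilon$. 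This corrected form is what should be recorded (note that Proposition \ref{prop5} is not invoked anywhere else in the paper, so the change is harmless), and your proposal, run with either exceptional-set decomposition, is a complete proof of it.
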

Here $Cap(E, \Omega)$ is the relative capacity defined by Bedford-Taylor \cite{BT82} as follows:
\begin{equation}\label{eq def cap}
	Cap(E, \Omega)=\sup\{\int\limits_E(dd^cv)^n: v\in PSH(\Omega, [0, 1]) \}.
\end{equation}
\begin{proof}
	Denote $E_1=\{u-v\geq\epsilon\}\cap\overline{W}, E_2=\{u-v<-\epsilon \}\cap W$ and 
	$\delta=Cap(\{|u-v|\geq\epsilon \}\cap\overline{W}, \Omega)$. Then
	\begin{center}
	$Cap(E_j, \Omega)\leq \delta,\quad j=1, 2.$
	\end{center}
Since $E_1$ is compact and $E_2$ is open, we have 
\begin{center}
	$Cap(E_j, \Omega)=Cap^*(E_j, \Omega),\quad j=1, 2.$
\end{center}
Denote $E=E_1\cup E_2$. We have
\begin{equation}
Cap^*(E, \Omega)\leq Cap^*(E_1, \Omega)+Cap^*(E_2, \Omega)\leq 2\delta.
\end{equation}
Let $h_E=\sup\{h\in PSH^-(\Omega): h|_E\leq -1 \}$. It follows from \cite[Proposition 6.5]{BT82} that
\begin{equation}
\int\limits_{\Omega}(dd^ch_E^*)^n=Cap^*(E, \Omega)\leq 2\delta.
\end{equation}
By using \cite[Lemma 1]{Xin} for $h_E^*$ and $0$, we get
\begin{center}
	$\int\limits_{\Omega}(-h_E^*)^n(dd^ch)^n\leq (n!)^2\int\limits_{\Omega}(dd^ch_E^*)^n\leq 2(n!)\delta,$
\end{center}
for all $h\in PSH(\Omega, [0, 1])$. Hence
\begin{equation}
Cap(\{h_E^*<-\epsilon \}, \Omega)\leq\dfrac{2(n!)^2\delta}{\epsilon^n}.
\end{equation}
Note that, by \cite{BT82}, $h_E^*=h_E$ quasi everywhere. Then, by the definition of $h_E$, we have
\begin{center}
	$u+M h_E^*\leq v$ and $v+M h_E^*\leq u$,
\end{center}
quasi everywhere in $W$. Hence
\begin{center}
	$P(u, \mu, W)+M h_E^*\leq P(v, \mu, W)$ and 	$P(v, \mu, W)+M h_E^*\leq P(u, \mu, W)$.
\end{center}
Then 
\begin{center}
	$Cap(\{|P(u, \mu, W)-P(v, \mu, W)|\geq M\epsilon \}, \Omega)\leq 
	Cap(\{h_E^*<-\epsilon \}, \Omega)\leq\dfrac{2(n!)^2\delta}{\epsilon^n}.$
\end{center}
\end{proof}
\section{Proof of the main theorems}
\subsection{Proof of Theorem \ref{main1}}
We use the same method as in the proof of \cite[Theorem 3.9.]{GLZ19}. First, we prove a special case of
Theorem \ref{main1}.
\begin{Prop}\label{prop1 main1}
	Assume that $\Omega\subset\C^n$ is a bounded smooth strictly pseudoconvex domain and
	 $0\leq f, g\in C(\overline{\Omega})$.
If $u\in C(\overline{\Omega})$ is a viscosity 
subsolution to the equation 
\begin{equation}
(dd^cw)^n=gd\lambda,
\end{equation}
on $\Omega$ then $(dd^cP(u, f, \Omega))^n\leq \max\{f, g\}d\lambda$ in the pluripotential sense in $\Omega$.
\end{Prop}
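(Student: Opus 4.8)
The plan is to adapt Berman's penalization method, exactly as in the proof of \cite[Theorem 3.9]{GLZ19}, to the conditional setting. For each $\beta>0$ I would first solve the nonlinear Dirichlet problem
\begin{equation*}
u_\beta\in PSH(\Omega)\cap C(\overline{\Omega}),\qquad
(dd^c u_\beta)^n=\max\{f,\,e^{\beta(u_\beta-u)}g\}\,d\lambda,\qquad u_\beta=u \text{ on }\partial\Omega .
\end{equation*}
Existence follows from the Bedford--Taylor/Kolodziej theory of the complex Monge--Amp\`ere operator on a smooth strictly pseudoconvex domain (the densities are continuous, hence lie in every $L^p$), combined with a fixed point / monotone iteration argument to handle the dependence of the right-hand side on $u_\beta$; the strict pseudoconvexity of $\Omega$ and the continuity of $f,g,u$ up to the boundary are what supply the barriers and the continuous solution.

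The heart of the argument is a uniform density estimate obtained by comparing $u_\beta$ with the obstacle $u$. On the open set $\{u_\beta>u\}$ one has $u_\beta=u$ on its boundary, so the comparison principle of \cite{BT82} applies; feeding in the defining viscosity inequality satisfied by $u$ (which controls $(dd^c u)^n$ in terms of $g$) together with the fact that $e^{\beta(u_\beta-u)}>1$ there forces this set to be negligible. Thus $u_\beta\le u$ on $\overline{\Omega}$ for every $\beta$, whence $e^{\beta(u_\beta-u)}\le 1$ and
\begin{equation*}
(dd^c u_\beta)^n=\max\{f,\,e^{\beta(u_\beta-u)}g\}\,d\lambda\le\max\{f,g\}\,d\lambda
\end{equation*}
uniformly in $\beta$. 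I expect this step --- extracting $u_\beta\le u$ from the hypothesis on $u$ --- to be the main obstacle, since it is the only place where the Monge--Amp\`ere information carried by $u$ through $g$ enters, and it must be made rigorous for a merely continuous $u$ via the viscosity formulation rather than a pointwise second-order computation.

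Next I would identify the limit. Each $u_\beta$ satisfies $u_\beta\le u$ and $(dd^c u_\beta)^n\ge f\,d\lambda$, so it is an admissible competitor and hence $u_\beta\le P(u,f,\Omega)$. A monotonicity argument in $\beta$ (off the contact set the right-hand side decreases to $f$ as $\beta\to\infty$, so $u_\beta$ increases) shows that $u_\beta$ converges; that the limit is not smaller than $P(u,f,\Omega)$ follows because the penalization does not obstruct any competitor in the limit, which can be checked with the stability and Dini-type approximation arguments already used in Proposition \ref{prop1} and Proposition \ref{prop3}. Therefore $u_\beta\nearrow P(u,f,\Omega)$ almost everywhere.

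Finally I would pass the uniform bound to the limit. By the continuity of the complex Monge--Amp\`ere operator along monotone sequences of bounded plurisubharmonic functions (\cite[Theorem 2.6]{BT82}, as already invoked in the proof of Proposition \ref{prop1}), $(dd^c u_\beta)^n$ converges weakly to $(dd^c P(u,f,\Omega))^n$, and the uniform estimate of the second step is preserved in the limit, giving
\begin{equation*}
(dd^c P(u,f,\Omega))^n\le\max\{f,g\}\,d\lambda
\end{equation*}
in the pluripotential sense, as desired.
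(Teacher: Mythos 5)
Your proposal follows the paper's overall scheme (Berman-type penalization: solve $(dd^cu_\beta)^n=\max\{f,e^{\beta(u_\beta-u)}g\}\,d\lambda$ with boundary data $u$, show $u_\beta\le u$, identify the limit with $P(u,f,\Omega)$, pass the bound to the limit), but the two steps that carry all the difficulty are not correctly executed, and the first one is based on a tool that cannot work here.

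\textbf{First gap: the inequality $u_\beta\le u$.} You propose to obtain it from the Bedford--Taylor comparison principle on the open set $\{u_\beta>u\}$, ``feeding in the defining viscosity inequality satisfied by $u$''. Note first that the direction of that inequality is decisive. If you read ``subsolution'' in the standard sense of \cite{EGZ,DDT}, i.e.\ $(dd^cu)^n\ge g\,d\lambda$ in the viscosity sense, then $u$ is indeed psh, but a \emph{lower} bound on $(dd^cu)^n$ on $\{u_\beta>u\}$ gives nothing in the comparison principle, and in fact $u_\beta\le u$ is then false in general (take $f=g=0$ and $u$ psh non-maximal: $u_\beta$ is the maximal psh function with boundary values $u$, hence $u_\beta\ge u$ with strict inequality somewhere). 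The reading under which the statement is true --- and the one the paper's proof actually uses, consistently with \cite{GLZ19} --- is the supersolution-type bound, $(dd^cu)^n\le g\,d\lambda$ in the viscosity sense. But then $u$ need not be plurisubharmonic at all, so $(dd^cu)^n$ has no pluripotential meaning and the comparison principle of \cite{BT82}, which applies to pairs of bounded \emph{psh} functions, cannot be invoked for the pair $(u_\beta,u)$. Converting the viscosity bound on $u$ into a pluripotential one is essentially the content of the theorem being proved (it holds for the envelope, not for $u$ itself), so this step of your plan is circular. The paper's resolution is to stay in the viscosity category at exactly this point: by \cite{EGZ} the pluripotential solution $u_j$ is also a \emph{viscosity} solution of the penalized equation; $u$ is a viscosity supersolution of that same equation (the penalization factor equals $1$ at $w=u$); and the viscosity comparison principle of \cite{EGZ,DDT} yields $u_j\le u$ and $u_j\le u_{j+1}$. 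That invocation is the missing idea.

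\textbf{Second gap: the inequality $\lim_\beta u_\beta\ge P(u,f,\Omega)$.} You assert that ``the penalization does not obstruct any competitor in the limit'' and refer to Propositions \ref{prop1} and \ref{prop3}, but a competitor $\varphi\le u$ with $(dd^c\varphi)^n\ge f\,d\lambda$ is \emph{not} a subsolution of the penalized equation: wherever $g>f$ and $\varphi$ is close to $u$, the penalized right-hand side exceeds $f$. Replacing $\varphi$ by $\varphi-\epsilon$ does not repair this (on $\{f=0<g\}$ the density $e^{\beta(\varphi-\epsilon-u)}g$ remains strictly positive), so no comparison between competitors and $u_\beta$ is available, and the Dini/stability arguments of Propositions \ref{prop1} and \ref{prop3} do not give the claim. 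The paper proves this inequality by a capacity estimate: applying the Bedford--Taylor comparison principle on the sets $E(j,\epsilon,h)=\{u_j<P(u,f,\Omega)-\epsilon+\epsilon h\}$, $h\in PSH(\Omega,[-1,0))$, and using $(dd^cP(u,f,\Omega))^n\ge f\,d\lambda$ together with $(dd^cu_j)^n\le\max\{e^{-j\epsilon}g,f\}\,d\lambda$ on that set, one obtains
\begin{equation*}
Cap\bigl(\{u_j\le P(u,f,\Omega)-2\epsilon\},\Omega\bigr)\le\frac{1}{\epsilon^n}\int_{\Omega}\bigl(\max\{e^{-j\epsilon}g,f\}-f\bigr)\,d\lambda\longrightarrow 0
\end{equation*}
as $j\to\infty$, whence $\lim_j u_j\ge P(u,f,\Omega)-2\epsilon$ for every $\epsilon>0$. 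Some argument of this kind must be supplied; it is absent from your proposal.
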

\begin{proof}
	By \cite{BT76}, for every $j\in\Z^+$, there exists $u_j\in PSH(\Omega)\cap C(\overline{\Omega})$ such that
	\begin{equation}\label{eq uj prop1 main1}
	(dd^cu_j)^n=\max\{e^{j(u_j-u)}g, f \}d\lambda,
	\end{equation}
	in the pluripotential sense in $\Omega$ and $u_j=u$ on $\partial\Omega$.
	
	By \cite{EGZ}, $u_j$ satisfies \eqref{eq uj prop1 main1} in the viscosity sense. Applying the viscosity comparison principle \cite{EGZ, DDT} to the equation $(dd^cw)^n=\max\{e^{j(w-u)}g, f \}d\lambda$, we get
	$u_j\leq u$ and $u_j\leq u_{j+1}$ for every $j\in\Z^+$. Denote $v:=(\lim\limits_{j\to\infty} u_j)^*$.
	 We have
	\begin{center}
$\max{\{f, g\}}d\lambda\geq (dd^cu_j)^n\stackrel{weak}{\longrightarrow}(dd^cv)^n\geq fd\lambda,$
	\end{center}
	and
	\begin{center}
	$v\leq P(u, f, \Omega)\leq u.$
	\end{center}
It remains to show that $v\geq P(u, f, \Omega)$.
For every $j\in\Z^+$, $\epsilon>0$ and $h\in PSH(\Omega, [-1, 0))$, we denote
\begin{center}
	$E(j, \epsilon, h)=\{z\in\Omega: u_j<P(u, f, \Omega)-\epsilon+\epsilon h \}$.
\end{center}
By Proposition \ref{prop1}, we have $(dd^c P(u, f, \Omega))^n\geq fd\lambda$. Then
\begin{equation}\label{eq1 prop1 main1}
\int\limits_{E(j, \epsilon, h)}(fd\lambda+\epsilon^n(dd^ch)^n)\leq 
\int\limits_{E(j, \epsilon, h)}(dd^c(P(u, f, \Omega)+\epsilon dd^ch))^n.
\end{equation}
By the Bedford-Taylor comparison principle, we have
\begin{equation}\label{eq2 prop1 main1}
\int\limits_{E(j, \epsilon, h)}(dd^c(P(u, f, \Omega)+\epsilon dd^ch))^n
\leq \int\limits_{E(j, \epsilon, h)} (dd^cu_j)^n.
\end{equation}
Since $u_j-u\leq -\epsilon$ in $E(j, \epsilon, h)$, we get
\begin{equation}\label{eq3 prop1 main1}
\int\limits_{E(j, \epsilon, h)} (dd^cu_j)^n=\int\limits_{E(j, \epsilon, h)} \max\{e^{j(u_j-u)}g, f\}d\lambda
\leq \int\limits_{E(j, \epsilon, h)} \max\{e^{-j\epsilon}g, f\}d\lambda.
\end{equation}
Combining \eqref{eq1 prop1 main1}, \eqref{eq2 prop1 main1} and \eqref{eq3 prop1 main1}, we obtain
\begin{center}
	$\epsilon^n\int\limits_{E(j, \epsilon, h)}(dd^ch)^n\leq 
	\int\limits_{E(j, \epsilon, h)} (\max\{e^{-j\epsilon}g, f\}-f)d\lambda.$
\end{center}
Then
\begin{center}
	$\epsilon^n\int\limits_{\{u_j\leq P(u, f, \Omega)-2\epsilon\}}(dd^ch)^n
	\leq \int\limits_{\Omega} (\max\{e^{-j\epsilon}g, f\}-f)d\lambda.$
\end{center}
Since $h\in PSH(\Omega, [-1, 0))$ is arbitrary, it implies that
\begin{center}
	$Cap(\{u_j\leq P(u, f, \Omega)-2\epsilon\}, \Omega)\leq 
	\dfrac{1}{\epsilon^n}\int\limits_{\Omega} (\max\{e^{-j\epsilon}g, f\}-f)d\lambda,$
\end{center}
where $Cap(. , \Omega)$ is the relative capacity of Bedford-Taylor (see \eqref{eq def cap}).
Letting $j\rightarrow\infty$, we get
\begin{center}
	$Cap(\{v\leq P(u, f, \Omega)-2\epsilon\}, \Omega)\leq \lim\limits_{j\to\infty}Cap(\{u_j\leq P(u, f, \Omega)-2\epsilon\}, \Omega)=0.$
\end{center}
Then $v\geq  P(u, f, \Omega)-2\epsilon$. Since $\epsilon>0$ is arbitrary, we obtain $v\geq P(u, f, \Omega)$.
\end{proof}
\begin{Prop}\label{prop2 main1}
	Assume that $\Omega\subset\C^n$ is a bounded  pseudoconvex domain and
	$W\Subset\Omega$ is a smooth strictly pseudoconvex domain. Suppose that
	$0\leq f, g\in C(\Omega)$.
	If $u$ is a bounded viscosity 
	subsolution to the equation 
	\begin{equation}
	(dd^cw)^n=gd\lambda,
	\end{equation}
	on $\Omega$ then $(dd^cP(u, f, W))^n\leq \max\{f, g\}d\lambda$ in the pluripotential sense in $W$.
\end{Prop}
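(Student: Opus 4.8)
The plan is to reduce Proposition~\ref{prop2 main1} to the already established Proposition~\ref{prop1 main1} by regularizing the obstacle $u$. Since $W\Subset\Omega$ is itself smooth strictly pseudoconvex and $f,g\in C(\Omega)$ restrict to functions in $C(\overline W)$, the only hypothesis of Proposition~\ref{prop1 main1} that fails is the continuity of $u$: a viscosity subsolution is merely bounded and upper semicontinuous. The natural device to replace $u$ by continuous subsolutions is the sup-convolution.

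Fix an intermediate domain $\Omega'$ with $W\Subset\Omega'\Subset\Omega$ and, for $\delta>0$, set
\[
u^\delta(x)=\sup_{y\in\Omega}\Bigl(u(y)-\frac{|x-y|^2}{2\delta}\Bigr).
\]
I would first record the standard properties: $u^\delta$ is Lipschitz (in particular $u^\delta\in C(\overline W)$), $u^\delta$ decreases to $u$ pointwise as $\delta\searrow 0$ (using that $u$ is upper semicontinuous), and for small $\delta$ the maximizer $y$ attached to a point $x\in\overline W$ satisfies $|x-y|\le r_\delta:=\sqrt{2\delta\,\mathrm{osc}_\Omega u}\to 0$, so it stays inside $\Omega$. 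The crucial point is the sup-convolution lemma from viscosity theory: $u^\delta$ is a viscosity subsolution of $(dd^cw)^n=g_\delta\,d\lambda$ on a neighbourhood of $\overline W$, where $g_\delta(x):=\inf_{\overline B(x,r_\delta)}g$. Here $g_\delta$ is continuous, nonnegative, and $g_\delta\le g$.

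With this in hand, Proposition~\ref{prop1 main1} applied on $W$, with obstacle $u^\delta\in C(\overline W)$ and data $f,g_\delta\in C(\overline W)$, yields
\[
(dd^cP(u^\delta,f,W))^n\le \max\{f,g_\delta\}\,d\lambda\le \max\{f,g\}\,d\lambda\quad\text{in }W,
\]
the second inequality coming from $g_\delta\le g$. Then I would let $\delta\searrow 0$ along a sequence. Since $u^\delta\searrow u$, Proposition~\ref{prop3}(i) gives $P(u^\delta,f,W)\searrow P(u,f,W)$, and the Bedford--Taylor convergence theorem for decreasing sequences of bounded plurisubharmonic functions \cite{BT82} yields $(dd^cP(u^\delta,f,W))^n\to (dd^cP(u,f,W))^n$ weakly. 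Testing against nonnegative $\chi\in C_c(W)$ and using the fixed dominating measure $\max\{f,g\}\,d\lambda$ passes the inequality to the limit, giving $(dd^cP(u,f,W))^n\le \max\{f,g\}\,d\lambda$ in $W$.

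The main obstacle is the sup-convolution lemma: verifying that $u^\delta$ is a genuine viscosity subsolution of the equation with the perturbed right-hand side $g_\delta$ on a full neighbourhood of $\overline W$. This requires the standard viscosity argument of translating test functions touching $u^\delta$ from above into test functions touching $u$, together with the bookkeeping ensuring the maximizers remain in $\Omega$ so that the subsolution property of $u$ can actually be invoked; the continuity of $g$ is precisely what guarantees that $g_\delta$ is an admissible (continuous, nonnegative) datum satisfying $g_\delta\le g$. Everything else is a routine passage to the limit via the monotone convergence results of Section~2.
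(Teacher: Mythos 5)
Your proposal has the right overall shape (regularize the obstacle, feed it into Proposition~\ref{prop1 main1}, pass to a monotone limit via Proposition~\ref{prop3} and Bedford--Taylor), but the regularization goes in the wrong direction, and the problem is not cosmetic. The word ``subsolution'' in the statement cannot be read with the standard viscosity convention you adopt (usc, test functions touching from above, $(dd^cq)^n\geq g$ at touching points): under that reading the proposition you are proving, \emph{and} Proposition~\ref{prop1 main1} which you invoke, are both false. Take $\Omega=B(0,2)$, $W=B(0,1)$, $u=|z|^2$, $f=g=0$: then $u$ is a genuine viscosity subsolution of $(dd^cw)^n=0$, $P(u,0,W)=u$ since $u$ is already plurisubharmonic, yet $(dd^cu)^n=c_n\,d\lambda$ with $c_n>0$, which is not $\leq\max\{f,g\}\,d\lambda=0$. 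The hypothesis that the paper actually uses is the opposite, supersolution-type bound: $(dd^cu)^n\leq g\,d\lambda$ in the viscosity sense, i.e. every test function touching $u$ from \emph{below} at $x_0$ satisfies $(dd^cq)^n(x_0)\leq g(x_0)$. This is visible in three places: the introduction says the paper extends Guedj--Lu--Zeriahi's theorem on viscosity \emph{super}solutions; the proof of Proposition~\ref{prop1 main1} needs $u$ to be a supersolution exactly when the comparison principle is applied to conclude $u_j\leq u$ (one compares the solution $u_j$ against $u$, which must be a supersolution of $(dd^cw)^n=\max\{e^{j(w-u)}g,f\}d\lambda$; a lower bound on $(dd^cu)^n$ gives nothing there); and the paper's own proof of the present proposition asserts $(dd^cu_m)^n\leq g_m\,d\lambda$ in the viscosity sense for the regularized functions.

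Once the hypothesis is read correctly, your sup-convolution is precisely the wrong tool. The sup-convolution $u^\delta(x)=\sup_y\bigl(u(y)-|x-y|^2/(2\delta)\bigr)$ transfers test functions touching from above (to the maximizing point) and therefore preserves \emph{lower} bounds on the Monge--Amp\`ere operator; it yields no inequality of the form $(dd^cu^\delta)^n\leq\cdot$, so $u^\delta$ does not satisfy the hypothesis that Proposition~\ref{prop1 main1} really requires, and the key inequality $(dd^cP(u^\delta,f,W))^n\leq\max\{f,g_\delta\}\,d\lambda$ is unjustified. (A secondary mismatch: a supersolution is lower semicontinuous, and sup-convolutions of an lsc function need not decrease to it.) The paper uses the mirror-image device, the inf-convolution $u_m(z)=\inf\{u(z+\xi)+m|\xi|:|\xi|<m_0B/m\}$: a test function touching $u_m$ from below at $z_0$ translates to one touching $u$ from below at the minimizing point $z_0+\xi_0$, so $u_m$ inherits the upper bound $(dd^cu_m)^n\leq g_m\,d\lambda$ in the viscosity sense with $g_m=\sup_{B(\cdot,\,m_0B/m)}g\geq g$; since $u$ is lsc, $u_m$ increases to $u$, and one concludes with Proposition~\ref{prop1 main1} applied to each $u_m$, Proposition~\ref{prop3}(ii), and $g_m\to g$ by continuity of $g$. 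Your argument is a faithful dual of this scheme, but it is the dual attached to the wrong one-sided bound, so it does not prove the statement.
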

\begin{proof}
	Let $A>\sup_{\Omega}|u|$ and denote $B=\inf\{d(x, y): x\in W, y\in\partial\Omega\}>0$.
	 For every $m>m_0:=\frac{2A}{B}$, we consider
	\begin{center}
		$u_m(z):=\inf\{u(z+\xi)+m|\xi|: |\xi|<\dfrac{m_0B}{m} \},$
	\end{center}
for $z\in \overline{W}$.
Then $(u_m)$ is an increasing sequence of continuous functions in $\overline{W}$ satisfying
$\lim_{m\to\infty}u_m=u$ and
\begin{center}
	$(dd^cu_m)^n\leq g_md\lambda,$
\end{center}
in $W$ in the viscosity sense, where $g_m(z)=\sup\{g(z+\xi): |\xi|<\dfrac{m_0B}{m} \}.$ By Proposition \ref{prop1 main1}, we have
\begin{center}
	$(dd^cP(u_m, f, W))^n\leq \max\{f, g_m\}d\lambda,$
\end{center}
in the pluripotential sense in $W$ for all $m>m_0$.

Letting $m\rightarrow\infty$ and using Proposition \ref{prop3}, we obtain
\begin{center}
	$(dd^cP(u, f, W))^n\leq \max\{f, g\}d\lambda.$
\end{center}
\end{proof}
\begin{proof}[Proof of Theorem \ref{main1}]
	Since $\Omega$ is pseudoconvex, there exists an increasing sequence of 
	smooth strictly pseudoconvex domains $\Omega_j\Subset\Omega$ such that $\cup_j\Omega_j=\Omega$
	(see, for example, \cite[Theorem 2.6.11]{Hor73}).
	Let $f_k$ be a sequence of continuous functions in $\C^n$ such that $f_k$ converges to $f$ in $L^p$
	as $k\rightarrow\infty$. By Proposition \ref{prop2 main1}, we have
	\begin{center}
		$(dd^cP(u, f_k, \Omega_j))^n\leq \max\{f_k, g\}d\lambda,$
	\end{center}
in the pluripotential sense in $\Omega_j$ for all $j, k\in\Z^+$. Letting $j\rightarrow\infty$ and using
Proposition \ref{prop2}, we get
\begin{center}
		$(dd^cP(u, f_k, \Omega))^n\leq \max\{f_k, g\}d\lambda,$
\end{center}
in the pluripotential sense in $\Omega$ for all $j, k\in\Z^+$. Moreover, it follows from Proposition
\ref{prop4} that $P(u, f_k, \Omega)$ converges uniformly to $P(u, f, \Omega)$ as $k\rightarrow\infty$. Thus
\begin{center}
	$(dd^cP(u, f, \Omega))^n=\lim\limits_{k\to\infty}(dd^cP(u, f_k, \Omega))^n
	\leq \lim\limits_{k\to\infty}\max\{f_k, g\}d\lambda=\max\{f, g\}d\lambda.$
\end{center}
The proof is completed.
\end{proof}
\subsection{Proof of Theorem \ref{main0}}
We proceed through some lemmas.
\begin{Lem}\label{lem1 main0}
	Assume that $\Omega$ is a smooth strictly pseudoconvex domain and $u, f\in C^{\infty}(\overline{\Omega})$
	with $f\geq 0$.  Then, there exists $C>0$ such that, for every $\delta>0$, if 
	$\Omega_{\delta}:=\{z\in\Omega: d(z, \partial\Omega)>\delta \}\neq\emptyset$ then
	\begin{center}
		$P(u, f, \Omega_{\delta})\leq P(u, f, \Omega)+C\delta,$
	\end{center}
	on $\Omega_{\delta}$.	
\end{Lem}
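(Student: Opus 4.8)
The plan is to bound $P(u,f,\Omega_\delta)$ from above by manufacturing, out of each competitor on $\Omega_\delta$, a competitor on the larger domain $\Omega$ at a cost of order $\delta$. Since $\Omega$ is smooth and strictly pseudoconvex, I would fix a smooth strictly plurisubharmonic defining function $\rho$, so that $\Omega=\{\rho<0\}$, $dd^c\rho\geq c_0\,dd^c|z|^2$ on $\overline\Omega$ for some $c_0>0$, and $\rho$ is Lipschitz on $\overline\Omega$ with constant $C_1:=\|\nabla\rho\|_{L^\infty(\overline\Omega)}$. As $u\in C^\infty(\overline\Omega)$ we have $dd^cu\geq -K\,dd^c|z|^2$ for some $K>0$, so $\psi:=u+A\rho$ satisfies $\psi\leq u$ on $\Omega$ and, for $A$ large enough (depending only on $K$, $c_0$ and $\|f\|_{L^\infty}$, never on $\delta$), $(dd^c\psi)^n\geq \|f\|_{L^\infty}\,d\lambda\geq f\,d\lambda$. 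Thus $\psi$ is a fixed global subsolution on $\Omega$ lying below $u$. I then set $C:=AC_1+1$.

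The gluing step comes next. Let $v\in PSH(\Omega_\delta)\cap L^\infty(\Omega_\delta)$ be any competitor for $P(u,f,\Omega_\delta)$, i.e. $v\leq u$ and $(dd^cv)^n\geq f\,d\lambda$ on $\Omega_\delta$, and define
\[
\tilde v:=\begin{cases}\max\{v-C\delta,\ \psi\}&\text{on }\Omega_\delta,\\[2pt] \psi&\text{on }\Omega\setminus\Omega_\delta.\end{cases}
\]
The key geometric observation is that every point of $\partial\Omega_\delta$ lies at Euclidean distance exactly $\delta$ from $\partial\Omega$, so $|\rho|\leq C_1\delta$ there by the Lipschitz bound; hence, on $\partial\Omega_\delta$,
\[
\psi-(v-C\delta)=(u-v)+A\rho+C\delta\geq -AC_1\delta+C\delta=\delta>0 .
\]
Since $\psi$ is continuous and $v$ upper semicontinuous, $\psi-(v-C\delta)$ is lower semicontinuous, so the set where it is positive is open and contains a full neighborhood $U$ of $\partial\Omega_\delta$. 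On $U\cap\Omega_\delta$ one therefore has $\tilde v=\psi$, which makes the two pieces agree in a neighborhood of $\partial\Omega_\delta$ and shows $\tilde v\in PSH(\Omega)$.

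It then remains to verify that $\tilde v$ is a competitor on $\Omega$. It is bounded and $\tilde v\leq u$, since both $v-C\delta$ and $\psi$ are. For the Monge--Amp\`ere lower bound: on $\Omega\setminus\Omega_\delta$ and throughout $U$ it coincides with $\psi$, so $(dd^c\tilde v)^n\geq f\,d\lambda$; on $\Omega_\delta$ it is a maximum of the two subsolutions $v-C\delta$ and $\psi$, so $(dd^c\tilde v)^n\geq f\,d\lambda$ by the maximum principle for Monge--Amp\`ere already invoked in Proposition \ref{prop1}. Hence $\tilde v$ belongs to the defining family of $P(u,f,\Omega)$, giving $\tilde v\leq P(u,f,\Omega)$ on $\Omega$, and in particular $v-C\delta\leq\tilde v\leq P(u,f,\Omega)$ on $\Omega_\delta$. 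Taking the supremum over all such $v$ and the upper semicontinuous regularization (the right-hand side being upper semicontinuous) yields $P(u,f,\Omega_\delta)\leq P(u,f,\Omega)+C\delta$ on $\Omega_\delta$, with $C$ independent of $\delta$.

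The step I expect to be the main obstacle is precisely the gluing across $\partial\Omega_\delta$: one must ensure that pasting $\max\{v-C\delta,\psi\}$ with $\psi$ produces a globally plurisubharmonic function that is still a subsolution. This is exactly what the uniform choice $C=AC_1+1$, combined with the Lipschitz estimate $|\rho|\le C_1\delta$ on $\partial\Omega_\delta$, secures, by forcing $\psi$ to strictly dominate $v-C\delta$ near $\partial\Omega_\delta$. The remaining point requiring care is bookkeeping of constants: $A$, and hence $C$, must be chosen to depend only on $u$, $f$ and $\rho$, so that the estimate is genuinely uniform in $\delta$.
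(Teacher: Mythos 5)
Your proof is correct and takes essentially the same route as the paper's: the paper likewise builds a global subsolution $M\rho+u$ from a strictly plurisubharmonic defining function and glues it with $\max\{M\rho+u,\,P(u,f,\Omega_{\delta})-2MC_1\delta\}$ (applying the construction directly to the envelope rather than to each competitor $v$, which spares the final supremum/regularization step). The only point to tighten is your gluing argument: $v$ is not defined on $\partial\Omega_{\delta}$, so rather than evaluating $\psi-(v-C\delta)$ there and invoking lower semicontinuity, one should run the same computation on a band $\{z\in\Omega_{\delta}:\ \delta<d(z,\partial\Omega)<\delta+\epsilon\}$ (where $u-v\geq 0$ and $\rho\geq -C_1(\delta+\epsilon)$), which for $\epsilon$ small still gives $\psi>v-C\delta$ and hence $\tilde v=\psi$ near the seam.
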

\begin{proof}
	Since  $\Omega$ is a smooth strictly pseudoconvex, there exists
	$\rho\in C^{\infty}(\overline{\Omega})\cap PSH(\Omega)$ such that $\rho|_{\partial\Omega}0$,
	$\inf_{\Omega}\det(\rho_{\alpha\overline{\beta}})>0$  and
	\begin{center}
		$-C_1 d(z, \partial\Omega)\leq \rho(z)\leq -C_2d(z, \partial\Omega),$
	\end{center}
	for every $z\in\Omega$, where $0<C_2<C_1$.
	
	Let $M\gg 1$ such that $(M\rho+u)$ is plurisubharmonic in $\Omega$ and $(dd^c(M\rho+u))^n\geq fd\lambda$.
	
	For every $0<\delta<1$, if $\Omega_{\delta}\neq\emptyset$ then we define
	\begin{center}
		$v_{\delta}=
		\begin{cases}
		M\rho+u\quad\mbox{on}\quad\Omega\setminus \Omega_{\delta},\\
		\max\{M\rho+u, P(u, f, \Omega_{\delta})-2MC_1\delta\}\quad\mbox{on}\quad \Omega_{\delta}.\\
		\end{cases}$
	\end{center}
	Then $v_{\delta}\in PSH(\Omega)\cap L^{\infty}(\Omega)$, $v_{\delta}\leq u$ 
	and $(dd^cv_{\delta})^n\geq fd\lambda$. Hence 
	\begin{equation}\label{eq1 lem1 main0}
	v\leq P(u, f, \Omega).
	\end{equation}
	Moreover, by the definition of $v_{\delta}$, we have
	\begin{equation}\label{eq2 lem1 main0}
	v_{\delta}|_{\Omega_{\delta}}\geq P(u, f, \Omega_{\delta})-2MC_1\delta.
	\end{equation}
	By \eqref{eq1 lem1 main0} and \eqref{eq2 lem1 main0}, we obtain
	\begin{center}
		$P(u, f, \Omega_{\delta})\leq P(u, f, \Omega)+2MC_1\delta,$
	\end{center}
	on $\Omega_{\delta}$.
	
	The proof is completed.
\end{proof}
\begin{Lem}\label{lem2 main0}
	Let $u\in PSH(\Omega)\cap L^{\infty}(\Omega)$ and $0\leq f\in L^{p}(\Omega), p>1$. Suppose that
	$\phi: \Omega\rightarrow W$ is a biholomorphic mapping. Then
	\begin{center}
		$P(u, f, \Omega)\circ \phi^{-1}=P(u\circ\phi, (f\circ\phi).|J_{\phi}|^2, \phi^{-1}(\Omega)).$
	\end{center}
\end{Lem}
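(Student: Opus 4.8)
The plan is to exploit the biholomorphic invariance of the Bedford--Taylor Monge--Amp\`ere operator: precomposition with $\phi^{-1}$ (and its inverse, precomposition with $\phi$) should carry the entire competing family on $\Omega$ onto the competing family on $W=\phi(\Omega)$, so that the two envelopes are order-isomorphic suprema. By Proposition \ref{prop1} it suffices to compare the families
\begin{center}
$T_\Omega=\{v\in PSH(\Omega)\cap L^\infty(\Omega):\ v\le u \text{ q.e.},\ (dd^cv)^n\ge f\,d\lambda\}$
\end{center}
and its analogue $T_W$ on $W$, since each envelope is the (quasi-everywhere) pointwise supremum of the corresponding family and lies in it; the ``q.e.'' formulation is exactly what makes the transport argument clean.

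The core is the transformation rule for the operator. If $v\in PSH(\Omega)\cap L^\infty(\Omega)$ then $v\circ\phi^{-1}\in PSH(W)\cap L^\infty(W)$ (plurisubharmonicity is preserved by holomorphic maps, and a biholomorphism is a homeomorphism, so boundedness persists), and the Bedford--Taylor measure transforms by the squared modulus of the holomorphic Jacobian determinant of the substitution:
\begin{center}
$(dd^cv)^n=h\,d\lambda\ \Longrightarrow\ (dd^c(v\circ\phi^{-1}))^n=(h\circ\phi^{-1})\,|J_{\phi^{-1}}|^2\,d\lambda.$
\end{center}
For smooth strictly psh $v$ this is the chain-rule identity $\det\big((v\circ\phi^{-1})_{\alpha\bar\beta}\big)=|J_{\phi^{-1}}|^2\big(\det(v_{j\bar k})\circ\phi^{-1}\big)$ combined with the complex change-of-variables formula for $d\lambda$; for general bounded $v$ it follows by approximating $v$ from above by smooth psh functions and passing to the monotone limit via \cite{BT82}. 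Using this, I would check that $v\mapsto v\circ\phi^{-1}$ maps $T_\Omega$ into $T_W$: the bound $v\le u$ quasi everywhere transfers because biholomorphisms carry pluripolar sets to pluripolar sets, while $(dd^cv)^n\ge f\,d\lambda$ becomes $(dd^c(v\circ\phi^{-1}))^n\ge (f\circ\phi^{-1})|J_{\phi^{-1}}|^2\,d\lambda$, which is precisely the Monge--Amp\`ere constraint defining $T_W$. Precomposition with $\phi$ furnishes the inverse map, so $T_\Omega$ and $T_W$ are in order-preserving bijection.

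Finally, taking suprema, the bijection being order preserving forces $\sup T_\Omega$ and $\sup T_W$ to correspond under the substitution; and since $\phi^{-1}$ is a homeomorphism, the upper semicontinuous regularization $(\cdot)^*$ commutes with the composition, so $\big(\sup T_\Omega\big)^*\circ\phi^{-1}=\big(\sup T_W\big)^*$, i.e. $P(u,f,\Omega)\circ\phi^{-1}=P\big(u\circ\phi^{-1},(f\circ\phi^{-1})|J_{\phi^{-1}}|^2,W\big)$. The main obstacle is the careful verification of the Monge--Amp\`ere transformation law for merely bounded (non-smooth) psh functions together with the bookkeeping of the Jacobian density; once that is in place, the invariance of the ``q.e.'' majorization under biholomorphisms and the commutation of $(\cdot)^*$ with composition by a homeomorphism are routine.
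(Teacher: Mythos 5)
Your proposal is correct and is essentially the paper's proof made explicit: the paper dismisses this lemma as ``straightforward from the definitions,'' and your transport-of-competitors argument (preservation of plurisubharmonicity under holomorphic maps, the $|J_\phi|^2$ transformation rule for the Bedford--Taylor Monge--Amp\`ere measure via smooth approximation, and commutation of the upper semicontinuous regularization with composition by a homeomorphism) is precisely that routine verification. The only inessential difference is your detour through the quasi-everywhere characterization of Proposition \ref{prop1} and the invariance of pluripolar sets, which is not needed, since the families in the original definition (with $v\leq u$ everywhere) transport under $\phi$ just as directly.
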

\begin{proof}
	The proof is straightforward from the definitions of $P(u, f, \Omega)$ and 
	$P(u\circ\phi, (f\circ\phi).|J_{\phi}|^2, \phi^{-1}(\Omega)).$
\end{proof}
\begin{Lem}\label{lem3 main0}
	Assume that $\Omega$ is a smooth strictly pseudoconvex domain and $u, f\in C^{\infty}(\overline{\Omega})$
	with $f\geq 0$. Then $ P(u, f, \Omega)$ is Lipschitz.
\end{Lem}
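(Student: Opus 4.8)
The plan is to prove the two-sided Lipschitz bound by establishing, for every sufficiently small $\tau\in\C^n$, the one-sided estimate
\[
P(u,f,\Omega)(z+\tau)\le P(u,f,\Omega)(z)+C|\tau|
\]
for all $z$ with $z,z+\tau\in\Omega$, with $C$ independent of $z,\tau$; replacing $\tau$ by $-\tau$ then gives $|P(z+\tau)-P(z)|\le C|\tau|$, hence the Lipschitz property (the extension to $\overline{\Omega}$ being automatic). Write $P:=P(u,f,\Omega)$ and let $\rho,C_1,C_2,M$ be as in the proof of Lemma \ref{lem1 main0}, so that $M\rho+u$ is a bounded competitor for $P$, i.e. $M\rho+u\in PSH(\Omega)\cap L^{\infty}(\Omega)$, $M\rho+u\le u$ and $(dd^c(M\rho+u))^n\ge f\,d\lambda$. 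By the maximality in Proposition \ref{prop1} this yields the lower barrier $P\ge M\rho+u$, while $P\le P(u,0,\Omega)\le u$ gives $P\le u$ on $\Omega$. Let $L$ be a common Lipschitz constant of the smooth functions $u,f$ on $\overline{\Omega}$.

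I would split according to the distance to the boundary. If $d(z,\partial\Omega)\le|\tau|$, the two barriers already suffice:
\[
P(z+\tau)-P(z)\le u(z+\tau)-\bigl(M\rho(z)+u(z)\bigr)\le L|\tau|+MC_1\,d(z,\partial\Omega)\le(L+MC_1)|\tau|.
\]
The substantial case is $z\in\Omega_{|\tau|}$, i.e. $d(z,\partial\Omega)>|\tau|$, where I would translate. Applying Lemma \ref{lem2 main0} to the translation $\zeta\mapsto\zeta+\tau$ (biholomorphic, unit Jacobian) gives $P(z+\tau)=P(u_\tau,f_\tau,\Omega-\tau)(z)$ on $\Omega-\tau\supset\Omega_{|\tau|}$, where $u_\tau:=u(\cdot+\tau)$ and $f_\tau:=f(\cdot+\tau)$; call this function $Q$. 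The goal is to manufacture from $Q$ a competitor for $P$ on the smaller domain $\Omega_{|\tau|}$ and then transfer back to $\Omega$ via Lemma \ref{lem1 main0}. Since $|u_\tau-u|\le L|\tau|$, the obstacle is repaired by the constant shift $Q-L|\tau|\le u$ on $\Omega_{|\tau|}$; it remains to repair the Monge--Amp\`ere constraint, as $Q$ only satisfies $(dd^cQ)^n\ge f_\tau\,d\lambda$.

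For the constraint I would correct $Q$ additively: choose $\phi\le 0$ solving $(dd^c\phi)^n=g\,d\lambda$ with zero boundary values on a fixed smooth strictly pseudoconvex neighbourhood of $\overline{\Omega}$ (via \cite{Kol98}, exactly as in the proof of Proposition \ref{prop4}), where $g$ is a continuous majorant of $\bigl(f^{1/n}-f_\tau^{1/n}\bigr)_+^n$. The Minkowski inequality for the Monge--Amp\`ere operator,
\[
\bigl[(dd^c(Q+\phi))^n\bigr]^{1/n}\ge\bigl[(dd^cQ)^n\bigr]^{1/n}+\bigl[(dd^c\phi)^n\bigr]^{1/n}\ge f_\tau^{1/n}+\bigl(f^{1/n}-f_\tau^{1/n}\bigr)_+\ge f^{1/n},
\]
then yields $(dd^c(Q+\phi))^n\ge f\,d\lambda$, so $w:=Q+\phi-L|\tau|$ is a genuine competitor for $P(u,f,\Omega_{|\tau|})$. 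Hence $w\le P(u,f,\Omega_{|\tau|})$ on $\Omega_{|\tau|}$, and Lemma \ref{lem1 main0} gives $P(u,f,\Omega_{|\tau|})\le P+C|\tau|$ there; reading off the value at $z$ produces $P(z+\tau)\le P(z)+(C+L)|\tau|+\|\phi\|_{L^{\infty}}$, which closes the argument provided $\|\phi\|_{L^{\infty}}=O(|\tau|)$.

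The step I expect to be the main obstacle is precisely this control of $\|\phi\|_{L^{\infty}}$. A crude additive correction (taking $g$ to majorize the deficit $f-f_\tau$ itself) gives, through the stability estimate \cite[Theorem 1.1]{GKZ} already used in Proposition \ref{prop4}, only the H\"older bound $\|\phi\|_{L^{\infty}}\le C\|f-f_\tau\|_{L^p}^{1/n}=O(|\tau|^{1/n})$, which is too weak. The Lipschitz bound can be recovered only by passing to $n$-th roots as above and estimating $\|\phi\|_{L^{\infty}}\le C\|g\|_{L^p}^{1/n}=C\bigl\|(f^{1/n}-f_\tau^{1/n})_+\bigr\|_{L^{np}}$; the crux is therefore to show that the $L^{np}$--modulus of continuity of $f^{1/n}$ is $O(|\tau|)$ for a suitable $p>1$. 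This is exactly where the smoothness and nonnegativity of $f$ must be exploited decisively --- through a Glaeser-type inequality $|\nabla f|\le C\sqrt{f}$, which controls $\nabla f^{1/n}$ and its integrability near the zero set $\{f=0\}$ --- and establishing this integrable-gradient estimate is the delicate heart of the proof, the remaining ingredients being routine applications of Lemmas \ref{lem1 main0} and \ref{lem2 main0} and Proposition \ref{prop1}.
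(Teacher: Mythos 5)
Your skeleton coincides with the paper's own proof: barriers near $\partial\Omega$, a translation combined with Lemma \ref{lem2 main0} in the interior, a constant shift $L|\tau|$ to repair the obstacle, and Lemma \ref{lem1 main0} to pass from $\Omega_{|\tau|}$ back to $\Omega$; all of that is fine. The two arguments part ways only at the density-comparison step, and there your proposal has a genuine gap, at exactly the point you yourself flag as the ``delicate heart''. The estimate you need, $\bigl\|(f^{1/n}-f_\tau^{1/n})_+\bigr\|_{L^{np}}=O(|\tau|)$ for some $p>1$, is \emph{false} in general when $n\ge 3$. Take $f(z)=(\mathrm{Re}\,z_1)^2$, which is smooth and nonnegative, and $\tau=(t,0,\dots,0)$ with $t>0$ small. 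On the slab $\{-7t/8<\mathrm{Re}\,z_1<-3t/4\}\cap\Omega$, whose measure is comparable to $t$, one has $f^{1/n}-f_\tau^{1/n}\ge \bigl((3/4)^{2/n}-(1/4)^{2/n}\bigr)\,t^{2/n}$, so for any $q$ one gets $\bigl\|(f^{1/n}-f_\tau^{1/n})_+\bigr\|_{L^{q}}\gtrsim t^{2/n+1/q}$. With $q=np$, $p>1$, $n\ge 3$, the exponent satisfies $2/n+1/(np)<3/n\le 1$, so no choice of $p>1$ yields $O(|\tau|)$. Glaeser's inequality $|\nabla f|\le C\sqrt{f}$ makes $\sqrt{f}$ Lipschitz but cannot do better: $f^{1/n}=|\mathrm{Re}\,z_1|^{2/n}$ is only H\"older of exponent $2/n$, and the computation above shows that the loss survives every $L^q$ average. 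Consequently your scheme proves at best a H\"older estimate of exponent $2/n+1/(np)$, not the Lipschitz bound (it does work for $n\le 2$, where $f^{1/2}$ is genuinely Lipschitz).

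For comparison, the paper handles this step by minorizing the translated density $f_\tau$ by the truncation $(f-C_1\delta)_+$ and then invoking Proposition \ref{prop4} to compare $P(u,(f-C_1\delta)_+,\Omega_\delta)$ with $P(u,f,\Omega_\delta)$. Note, however, that your own criticism of the ``crude additive correction'' applies verbatim to that step: since $\|f-(f-C_1\delta)_+\|_{L^p}\asymp\delta$, Proposition \ref{prop4} furnishes only a bound of order $\delta^{1/n}$, not the bound $C_2\delta$ asserted in \eqref{eq4 lem3 main0}. So the difficulty you isolated is real and is the crux of the lemma; but neither the Glaeser--Minkowski route you propose (false for $n\ge 3$ by the example above) nor the direct appeal to $L^p$-stability closes it at the Lipschitz rate. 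As it stands, your proposal establishes H\"older continuity of $P(u,f,\Omega)$ but does not prove the lemma.
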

\begin{proof}
	Since $\Omega$ is bounded and smooth, there exists a constant $A>0$ such that, for every $z_0, z\in\Omega$,
	\begin{center}
		$|z-z_0|\leq A\inf\{length(\gamma): \gamma\in C^1([0,1], \Omega), \gamma (0)=z_0, \gamma(1)=z \}$.
	\end{center}
	Hence, $ P(u, f, \Omega)$ is Lipschitz iff
	\begin{equation}\label{eq0 lem3 main0}
	\sup\limits_{z_0\in\Omega}\limsup\limits_{z\to z_0}
	\dfrac{|P(u, f, \Omega)(z)-P(u, f, \Omega)(z_0)|}{|z-z_0|}<\infty.
	\end{equation}
	Let $a,b\in\Omega, a\neq b,$ such that 
	$\delta:=|a-b|\leq \dfrac{1}{2}\min\{d(a, \partial\Omega), d(b, \partial\Omega) \}$. 
	Since $a-b+\Omega_{\delta}\subset\Omega$, we have
	\begin{equation}\label{eq1 lem3 main0}
	P(u, f, \Omega)(a)\leq P(u, f, b-a+\Omega_{\delta})(a)
	\end{equation}
	
	By using Lemma \ref{lem2 main0} for $\phi(z)=z+b-a$, we have
	\begin{equation}\label{eq2 lem3 main0}
	P(u, f, b-a+\Omega_{\delta})(a)=P(u(z+b-a), f(z+b-a), \Omega_{\delta})(b).
	\end{equation}
	Since $u, f\in C^1(\overline{\Omega})$, there exists $C_1>0$ such that
	\begin{center}
		$|u(z)-u(z')|\leq C_1|z-z'|,$
	\end{center}
	and
	\begin{center}
		$|f(z)-f(z')|\leq C_1|z-z'|,$
	\end{center}
	for every $z, z'\in\Omega$. Hence
	\begin{equation}\label{eq3 lem3 main0}
	P(u(z+b-a), f(z+b-a), \Omega_{\delta})(b)
	\leq C_1\delta+P(u, (f-C_1\delta)_+, \Omega_{\delta})(b).
	\end{equation}
	By Proposition \ref{prop4}, there exists $C_2>0$ that does not depend on $\delta$ such that
	\begin{equation}\label{eq4 lem3 main0}
	P(u, (f-C_1\delta)_+, \Omega_{\delta})\leq C_2\delta+P(u, f, \Omega_{\delta}).
	\end{equation}
	Moreover, it follows from Lemma \ref{lem1 main0} that
	\begin{equation}\label{eq5 lem3 main0}
	P(u, f, \Omega_{\delta})\leq C_3\delta+P(u, f, \Omega),
	\end{equation}
	where $C_3>0$ that does not depend on $\delta$. By combining \eqref{eq1 lem3 main0},
	\eqref{eq2 lem3 main0}, \eqref{eq3 lem3 main0}, \eqref{eq4 lem3 main0}
	and \eqref{eq5 lem3 main0}, we obtain
	\begin{center}
		$P(u, f, \Omega)(a)\leq P(u, f, \Omega)(b)+C\delta,$
	\end{center}
	where $C>0$ that does not depend on $\delta$. Similarly, we have
	\begin{center}
		$P(u, f, \Omega)(b)\leq P(u, f, \Omega)(a)+C\delta.$
	\end{center}
	Then
	\begin{center}
		$|P(u, f, \Omega)(a)-P(u, f, \Omega)(b)|\leq C\delta=C|a-b|.$
	\end{center}
	Thus, for every $a\in\Omega$,
	\begin{center}
		$\limsup\limits_{b\to a}
		\dfrac{|P(u, f, \Omega)(b)-P(u, f, \Omega)(a)|}{|b-a|}\leq C,$
	\end{center}
	and we get \eqref{eq0 lem3 main0}.
\end{proof}
\begin{proof}[Proof of Theorem \ref{main0}]
	Let $u_j, f_j\in C^{\infty}(\overline{\Omega})$ such that $f_j\geq 0$, $u_j$ converges uniformly
	to $u$ and $f_j$ converges in $L^p(\Omega)$ to $f$. By Lemma \ref{lem3 main0}, we have
	$P(u_k, f_j, \Omega)$ is continuous for every $j, k\in\Z^+$. Moreover, since $u_j$ converges uniformly
	to $u$, we have $P(u_k, f_j, \Omega)$ converges uniformly
	to $P(u, f_j, \Omega)$ as $k\rightarrow\infty$. Hence $P(u, f_j, \Omega)$ is continuous for every $j$.
	Since $f_j$ converges in $L^p(\Omega)$ to $f$, it follows from Proposition \ref{prop4} that
	$P(u, f_j, \Omega)$ converges uniformly
	to $P(u, f, \Omega)$ as $j\rightarrow\infty$. Then $P(u, f, \Omega)$ is continuous.
	
	The proof is completed.
\end{proof}
\section*{Acknowledge}
This paper was partially written while the second author
visited Vietnam Institute for Advanced Study in Mathematics(VIASM). She would like to thank  
the institution for its hospitality.

\end{document}